\documentclass[11pt, oneside]{amsart}
\usepackage[margin=1.5in]{geometry}
\usepackage{cite}
\usepackage[utf8]{inputenc}
\usepackage[T1]{fontenc}
\usepackage[english]{babel}
\usepackage{amsmath,amsfonts,amsthm, mathrsfs}
\usepackage{hyperref}
\usepackage{tikz-cd}

\theoremstyle{plain}
\newtheorem{theorem}{Theorem}[section]

\newtheorem{lemma}[theorem]{Lemma}
\newtheorem{proposition}[theorem]{Proposition}

\theoremstyle{definition}

\newtheorem{remark}[theorem]{Remark}
\numberwithin{equation}{section}
\numberwithin{figure}{section}
\numberwithin{table}{section}

\DeclareMathOperator{\E}{\mathbb E}
\renewcommand{\P}{\operatorname{\mathbb P}}
\DeclareMathOperator{\Var}{Var}
\DeclareMathOperator{\Cov}{Cov}
\DeclareMathOperator{\GL}{GL}

\title{A central limit theorem for cycles of Mallows permutations}

\author{Jimmy He}
\address{Department of Mathematics, MIT, Cambridge, MA  02139}
\email{jimmyhe@mit.edu}
\keywords{Mallows permutations, regenerative processes, cycle structure}
\date{\today}
%\subjclass{60F05 (Primary) 60B15, 05E16, 20B30 (Secondary)}

\begin{document}
\maketitle
\begin{abstract}
Fix $q\neq 1$, and sample $w\in S_n$ from the Mallows measure. We study the distribution of $C_i(w)$, the number of $i$-cycles, as $n$ grows large. When $q<1$, they are jointly Gaussian, and this more or less follows from known ideas, but the regime $q>1$ behaves quite differently. In particular, we show that the even cycles $C_{2i}(w)$ have a mean and variance of order $n$, and jointly converge to Gaussian random variables, while the odd cycles $C_{2i+1}(w)$ have a bounded mean and variance, and converge to $C_{2i+1}(w^{even})$ or $C_{2i+1}(w^{odd})$ for some explicit random permutations $w^{even}$ and $w^{odd}$, depending on whether $n$ is even or odd. An extension to a larger class of functions is also given. The proof utilizes a two-sided stationary regenerative process associated to Mallows permutations constructed by Gnedin and Olshanski \cite{GO12}, extending the ideas of Basu and Bhatnagar \cite{BB17}.
\end{abstract}

\section{Introduction}
This paper studies the cycle structure for random permutations drawn from the Mallows distribution with fixed parameter $q$. Let $C_i(w)$ denote the number of cycles of length $i$ in $w$. For the uniform distribution, Goncharov \cite{G42} showed that the $C_i(w)$ jointly converge to independent Poisson random variables of mean $i^{-1}$. The macroscopic cycles are known to have Poisson-Dirichlet statistics.

There has also been previous work on this problem for Mallows distributed random permutations. Work of Gladkich and Peled \cite{GP18} found the order of the expected length of the cycle containing some given point when $q<1$. They also showed that when $1>q\gg 1-\frac{1}{\sqrt{n}}$, the behaviour of the macroscopic cycles is very similar to that of a uniform permutation, exhibiting Poisson-Dirichlet statistics. There is also work of Mukherjee \cite{M16a}, who studied the regime where $q\approx 1-\frac{\beta}{n}$, and showed that the microscopic cycles converge to independent Poisson random variables, although the means are different from the uniform distribution. In particular, his result applies even if $q>1$, although it is still asymptotically very close to $1$.

Our work complements these results by studying the case when $q$ is fixed. Here, we expect to see behaviour very different from the uniform model, and indeed our results show that the $C_i$ converge to normally distributed random variables with a non-trivial covariance structure. We also study the case when $q>1$, where more interesting behaviour occurs. Mallows permutations with $q>1$ concentrate along the anti-diagonal, and so one expects almost no cycles of odd length. Indeed, we show that the $C_{2i}(w)$ converge to normally distributed random variables with a non-trivial correlation structure, while the $C_{2i+1}(w)$ converge without normalization to limiting random variables depending on whether $n$ is even or odd.

\subsection{Main results}
Fix some permutation $w\in S_n$. An \emph{inversion} of $w$ is a pair $i,j\in \{1,\dotsc, n\}$, with $i< j$, such that $w(i)>w(j)$. The \emph{length} of $w$, denoted $l(w)$, is the number of inversions in $w$. We let $C_i(w)$ denote the number of cycles of size $i$ in the cycle decomposition of $w$, and let $C(w)$ denote the total number of cycles.

The \emph{Mallows distribution} $\mu_q$ is a one-parameter family of probability measures on $S_n$, indexed by a real parameter $q\in (0,\infty)$. A random permutation $w\in S_n$ is Mallows distributed if
	\begin{equation*}
	\P(w=w_0)=\frac{q^{l(w_0)}}{Z_n(q)}
	\end{equation*}
	where $Z_n(q)$ is a normalization constant, given explicitly as
	\begin{equation*}
	Z_n(q)=(1+q)(1+q+q^2)\dotsm (1+q+\dotsc+q^{n-1}).
	\end{equation*}
	When $q=1$, it reduces to the uniform distribution and when $q\to 0$ or $q\to\infty$, it degenerates to a point mass at the identity or the permutation sending $i$ to $n-i+1$ respectively.

Our first result gives the asymptotic means and covariances of the $C_i(w)$, as well as their limiting distribution, when $q<1$.
\begin{theorem}
\label{thm: q<1}
Let $q\in (0,1)$, and let $w\in S_n$ be Mallows distributed with parameter $q$. Let $C_i(w)$ denote the number of $i$-cycles in $w$, and let $C(w)$ denote the total number of cycles. Then there exist constants $\alpha_i,\beta_{ij}>0$, depending only on $q$, such that
\begin{equation*}
    \E(C_i(w))\sim \alpha_i n,\qquad \Cov(C_i(w),C_j(w))\sim \beta_{ij}n,
\end{equation*}
and
\begin{equation*}
    \left(\frac{C_i(w)-\alpha_i n}{\sqrt{n}}\right)_{i\geq 1}\xrightarrow{(d)}(Z_i)_{i\geq 1},
\end{equation*}
where the $Z_i$ are normal random variables of mean $0$ and covariance structure $\beta_{ij}$.

In addition, if $\alpha=\sum \alpha_i$, there exists a constant $\beta>0$ such that
\begin{equation*}
    \E(C(w))\sim \alpha n,\qquad\Var(C(w))\sim \beta n,
\end{equation*}
and
\begin{equation*}
    \frac{C(w)-\alpha n}{\sqrt{\beta n}}\xrightarrow{(d)}Z,
\end{equation*}
where $Z\sim N(0,1)$.
\end{theorem}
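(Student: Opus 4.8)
The plan is to realize each cycle count $C_i(w)$ as a reward functional of the two-sided stationary regenerative process of Gnedin and Olshanski \cite{GO12}, and then to invoke a renewal-reward central limit theorem. The starting point is the observation that for $q<1$ the Mallows permutation is \emph{local}: with positive density there are \emph{splitting points} $k$, namely positions with $w(\{1,\dotsc,k\})=\{1,\dotsc,k\}$, at which $w$ decomposes as a direct sum. Conditioned on the set of splitting points, the restrictions of $w$ to the consecutive blocks are independent Mallows permutations, and in the bulk these blocks are (asymptotically) i.i.d.\ with block lengths having exponential tails. I would first recall the precise form of this regenerative decomposition from \cite{GO12}, passing to the bi-infinite stationary version on $\mathbb{Z}$ so that the blocks between consecutive regeneration points are genuinely i.i.d.

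The key structural fact is that every cycle of $w$ lies in a single block, so cycle counts are additive over blocks: writing $B_1,B_2,\dotsc$ for the blocks meeting $\{1,\dotsc,n\}$, we have $C_i(w)=\sum_m C_i(B_m)$ up to the two boundary blocks straddling positions $1$ and $n$. The reward attached to the $m$-th block is $R^{(i)}_m:=C_i(B_m)$, which is bounded by the block length $\tau_m$ and hence, by the exponential tail, has all moments. I would then couple the finite Mallows measure on $\{1,\dotsc,n\}$ with the stationary process so that the bulk agrees, and argue that the two truncated boundary blocks contribute only $O_P(1)$, which is negligible at scale $\sqrt{n}$.

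With $C_i(w)$ so expressed as a cumulative reward of a regenerative process with i.i.d.\ inter-regeneration lengths $\tau_m$ and rewards $R^{(i)}_m$, both of finite variance, the classical renewal-reward CLT yields $\E(C_i(w))\sim\alpha_i n$ and $\Var(C_i(w))\sim\beta_{ii}n$ with $\alpha_i=\E(R^{(i)})/\E(\tau)$ and an explicit $\beta_{ii}$, together with asymptotic normality. For the joint statement I would apply the Cram\'er--Wold device: any finite linear combination $\sum_i t_i C_i(w)$ is again a cumulative reward, with per-block reward $\sum_i t_i R^{(i)}_m$, so the same one-dimensional renewal-reward CLT gives convergence of all finite-dimensional marginals, hence joint convergence of $(C_i(w)-\alpha_i n)/\sqrt{n}$ to the Gaussian field $(Z_i)$. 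The total count $C(w)=\sum_i C_i(w)$ is handled identically, with per-block reward the total number of cycles in the block; the identity $\alpha=\sum_i\alpha_i$ then follows, the interchange of sum and limit being justified by the exponential tail on block lengths.

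The main obstacle I expect is not the CLT itself but the setup: making the regenerative decomposition precise and, in particular, controlling the boundary blocks and verifying that the finite-$n$ measure is well approximated in the bulk by the stationary process, so that the i.i.d.\ renewal structure is legitimately available. A secondary point requiring care is the \emph{non-degeneracy} of the limiting covariance, i.e.\ showing $\beta_{ij}>0$ and $\beta>0$ rather than merely $\geq 0$; this amounts to checking that the reward variables are genuinely random, and not almost-surely constant multiples of the $\tau_m$, which should follow from the positive probability that a single block contains an $i$-cycle.
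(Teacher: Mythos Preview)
Your proposal is correct and follows essentially the same strategy as the paper: decompose $w$ along its regeneration (splitting) points into independent irreducible blocks, note that cycle counts are additive over blocks, and apply a renewal-reward CLT (Smith's theorem) together with finite-moment estimates on the block lengths to obtain both the moment asymptotics and the Gaussian limit; non-degeneracy of the variance is handled by observing that $C_i$ is not constant on the irreducible permutations of a given large size.

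One simplification worth noting: the paper works with the \emph{one-sided} Mallows process on $\mathbf{N}$ rather than the two-sided stationary process. Because the relative order of $\widehat{w}(1),\dotsc,\widehat{w}(n)$ is \emph{exactly} Mallows distributed, no coupling argument is needed and there is only a single boundary block (the one containing $n$), whose contribution is bounded by the size-biased block length and hence $O_P(1)$. This dissolves what you flagged as the ``main obstacle''. The paper also invokes the multivariate form of Smith's CLT directly, so Cram\'er--Wold is not needed, though your route via linear combinations is of course equivalent.
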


The more interesting case is when $q>1$, which is covered by the next theorem.
\begin{theorem}
\label{thm: q>1}
Let $q\in (1,\infty)$, and let $w\in S_n$ be Mallows distributed with parameter $q$. Let $C_i(w)$ denote the number of $i$-cycles in $w$, and let $C(w)$ denote the total number of cycles. Then there exist constants $\alpha'_{i},\beta'_{ij}>0$, depending only on $q$, such that
\begin{equation*}
    E(C_{2i}(w))\sim \alpha'_{i}n,\qquad\Cov(C_{2i}(w),C_{2j}(w))\sim \beta'_{ij}n,
\end{equation*}
and
\begin{equation*}
    \left(\frac{C_{2i}(w)-\alpha'_i n}{\sqrt{n}}\right)_{i\geq 1}\xrightarrow{(d)}(Z_i)_{i\geq 1},
\end{equation*}
where the $Z_i$ are normal random variables of mean $0$ and covariance structure defined by $\beta_{ij}'$.

There exist random permutations $w^{even}$ and $w^{odd}$ (of a random length) such that the joint distribution of the odd cycles $(C_{2i+1}(w))_{i\geq 0}$ converges to $(C_{2i+1}(w^{even}))_{i\geq 0}$ or $(C_{2i+1}(w^{odd}))_{i\geq 0}$, depending on whether $n\to\infty$ is even or odd.

Finally, if $\alpha'=\sum \alpha'_{i}$, there exists a constant $\beta'>0$ such that
\begin{equation*}
    \frac{C(w)-\alpha' n}{\sqrt{\beta' n}}\xrightarrow{(d)}Z,
\end{equation*}
where $Z\sim N(0,1)$.
\end{theorem}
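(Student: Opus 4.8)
The plan is to reduce the whole $q>1$ theorem, including the final total-cycle CLT, to the near-identity $q<1$ regime through the reversal involution, and then to read off all three conclusions from a single regenerative decomposition. Let $r_n\in S_n$ denote the reversal $r_n(i)=n+1-i$, and recall that $l(r_nw)=\binom n2-l(w)$, so that if $w\sim\mu_q$ with $q>1$ then $u:=r_nw\sim\mu_{1/q}$ with $1/q<1$. Thus $w=r_nu$ with $u$ a near-identity Mallows permutation, to which the Gnedin--Olshanski bi-infinite model applies: $u$ has renewal (splitting) points $R=\{j:u(\{1,\dotsc,j\})=\{1,\dotsc,j\}\}$ forming a stationary point process of positive density with exponentially decaying gaps, and conditionally on $R$ the restrictions of $u$ to the blocks between consecutive renewals are independent. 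The finite permutation would be compared to the bi-infinite stationary process through a coupling whose error is controlled by the exponential decay of correlations of $\mu_q$.

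The structural heart of the argument is the observation that, because $w=r_nu$ reverses order, its invariant sets are symmetric about the centre. A point $j$ is a symmetric cut point, meaning $w$ maps $\{1,\dotsc,j\}\cup\{n+1-j,\dotsc,n\}$ to itself, exactly when $j\in R\cap r_n(R)$; in the bulk $R$ and $r_n(R)$ decouple (their local structures sit near $j$ and near $n-j$, separated by order $n$), so the symmetric cut points form a stationary renewal-type process of positive density, of which there are order $n$. Between consecutive symmetric cut points $w$ preserves a double block $D_k=(s_{k-1},s_k]\cup(n-s_k,n-s_{k-1}]$ on which, one checks, $w$ swaps the left mirror-interval with the right one; hence $w|_{D_k}$ alternates between the two halves and \emph{every} cycle it contains has even length. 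Consequently all odd cycles of $w$ are forced into the single central block $D_0=(s_p,n-s_p]$ straddling the centre, which explains at the structural level why even cycles are macroscopic and odd cycles are rare.

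From here the three conclusions follow from one mechanism. The quantities $C_{2i}(w)$ and $C(w)$ are additive functionals $\sum_k f(w|_{D_k})$ of the symmetric-cut-point regenerative process, where $f$ counts $2i$-cycles (respectively all cycles) in a block; the per-block contributions are locally determined with exponential tails and are asymptotically independent across blocks by Mallows mixing, and there are order $n$ of them, so a regenerative/renewal-reward CLT gives mean and covariance of order $n$ together with joint asymptotic normality after centering and scaling by $\sqrt n$. Taking $f$ to be the total cycle count yields precisely the final statement $\tfrac{C(w)-\alpha'n}{\sqrt{\beta'n}}\xrightarrow{(d)}Z\sim N(0,1)$, with the central block $D_0$ contributing only $O(1)$ and hence washed out by the $\sqrt n$ scaling. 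For the odd cycles, since they all live in $D_0$ and $|D_0|=n-2s_p$ is $O(1)$ in distribution (the distance from the centre to the nearest symmetric cut point), the law of $w|_{D_0}$ converges, by stationarity and locality of $\mu_q$ near the centre, to an explicit random permutation $w^{even}$ or $w^{odd}$ according to whether the centre is the integer $(n+1)/2$ or falls between $n/2$ and $n/2+1$; this gives the convergence $(C_{2i+1}(w))_{i\ge0}\to(C_{2i+1}(w^{even/odd}))_{i\ge0}$.

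The hard part will be making the genuinely two-sided regenerative structure rigorous and verifying the CLT hypotheses in this doubled setting. One must show that the local structures of $u$ at $j$ and at $n-j$ decouple quantitatively, control the correlations between distinct double blocks (which are only asymptotically independent, since the same $u$ governs both mirror halves), and bound the coupling error between the finite permutation and the bi-infinite stationary model uniformly enough to transfer both moments and the limit law. Establishing the requisite exponential mixing and the integrability of the per-block cycle counts---so that the mean and variance are genuinely of order $n$ with $\beta'>0$---is where the real work lies; by comparison, identifying $w^{even}$ and $w^{odd}$ and proving convergence of the central block is comparatively soft once the regenerative picture is in place.
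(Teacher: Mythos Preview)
Your overall architecture matches the paper's: reverse to parameter $q^{-1}<1$, identify the symmetric cut points where both $j$ and its mirror are renewals of the underlying near-identity permutation, observe that on each double block $w$ swaps the two halves so all cycles there are even, and apply a regenerative CLT to the block sums while the central block carries all odd cycles and yields $w^{even}/w^{odd}$. That is exactly the paper's decomposition (what it calls the \emph{symmetric process}).

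Where you diverge is in how you propose to obtain independence. You frame the two halves as governed by ``the same $u$'' and hence only \emph{asymptotically} independent, to be controlled by exponential mixing and a finite-to-infinite coupling. The paper avoids this entirely: it embeds $w$ into the Gnedin--Olshanski \emph{stationary} process on $\mathbf{Z}$ from the outset, and there, once past the central excursion $w_0$, the process to the left and the process to the right of $0$ are \emph{exactly} independent copies of the one-sided Mallows process (this is the regenerative property, not a mixing estimate). Consequently the symmetric regeneration times $S_i$ for $i\ge 1$ are literally the simultaneous return times to $(0,0)$ of two independent copies of the Basu--Bhatnagar Markov chain $M_n=\max_{j\le n}\widehat w(j)-n$, and the $(S_i,v_i)$ are genuinely i.i.d.\ for $i\ge 1$. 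No decoupling or mixing argument is needed.

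With exact independence in hand, the actual technical work in the paper is not what you flag. It is proving $\E(S_i^k)<\infty$ for all $k$, i.e.\ that the return time to $(0,0)$ of the product chain has all moments. This is done by a monotone coupling (hitting $B_i=\{\max=i\}$ from $B_{i+1}$ is stochastically faster than hitting $B_{i-1}$ from $B_i$) together with the identity $\P_{\mu\times\mu}(R_{(0,0)}=t-1)=\mu_0^2\,\P_{(0,0)}(R_{(0,0)}^+\ge t)$ relating stationary hitting times to return-time tails, which bootstraps finiteness of moments inductively. Your proposal gestures at ``integrability of the per-block cycle counts'' but locates the difficulty in the wrong place; once you work in the stationary bi-infinite model, the mixing and coupling issues you worry about disappear, and the moment bound for the doubled return time is the crux.
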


\begin{remark}
	The constants $\alpha_i$, $\beta_{ij}$, $\beta$, $\alpha_i'$, $\beta_{ij}'$, and $\beta'$ do not in general appear to have nice formulas, but can be given implicitly as moments of the number of cycles in a related random permutation, see Remark \ref{rmk: constants}. For $q<1$, a characterization was given in \cite[Proposition 3.3]{PT19}.
	
	However, $\alpha_1$ has the formula
	\begin{equation*}
	    \alpha_1=\frac{1-q}{q}(q;q)_\infty\sum_{j=0}^\infty \frac{q^{(j+1)^2}}{(q;q)_j^2},
	\end{equation*}
	where $(a;q)_r=\prod _{i=0}^{r-1}(1-aq^i)$ denotes the $q$-Pochhammer symbol. This series converges rapidly and allows reasonable asymptotic estimates. This is stated as Proposition \ref{prop: a1}.
\end{remark}

\begin{remark}
\label{rmk: ext}
In fact, Theorems \ref{thm: q<1} and \ref{thm: q>1} hold for a more general class of functions. We make a few more comments on the details of this extension in Remark \ref{rmk: proof of ext}.

Let $w\in S_n$, and suppose that $w=w_1w_2$ where $w_1$ sends $[1,i]$ to itself and $[i+1,n]$ to itself. Say that a function $f:\cup _n S_n\to \mathbf{R}^d$ is \emph{additive} if $f(w)=f(w_1)+f(w_2)$ for all $w=w_1w_2$ decomposing in this way, where $f(w)$ for $w$ a permutation on an interval $[i,j]$ is defined by shifting down the permutation to $[1,j-i+1]$. Then Theorem \ref{thm: q<1} holds for any (non-trivial) additive function satisfying $|f(w)|\leq Cn^k$ for $w\in S_n$, where $C$ and $k$ are constants.

Let $w\in S_n$, and suppose that $w=w_1w_2$, where $w_1$ sends $[i,n-i]$ to itself and $w_2$ sends $[1,i]$ to $[n-i+1,n]$ and vice versa. Say that $f:\cup_n S_n\to\mathbf{R}^d$ is \emph{anti-additive} if $f(w)=f(w_1)+f(w_2)$ for all $w=w_1w_2$ decomposing in this way. Then Theorem \ref{thm: q>1} holds for any (non-trivial) anti-additive function satisfying $|f(w)|\leq Cn^k$ for $w\in S_n$, where $C$ and $k$ are constants.

Here, non-trivial means that $f$, when restricted to permutations of size $n$ for which no non-trivial decomposition of the form $w=w_1w_2$ exists, is non-constant. This assumption is needed to ensure that the variances $\beta_{ii}$ and $\beta_{ii}'$ are non-zero. Almost any reasonable function satisfies this, but note in particular that $f(w)=n$ the size of the permutation does not satisfy this, and indeed Theorems \ref{thm: q<1} and \ref{thm: q>1} obviously fail for this function.

Note that if $f(w)$ is either additive or anti-additive, then so is $f(w^{-1})$. Thus, the theorems also apply to joint statistics for a Mallows permutation and its inverse, giving another proof of Theorem 1.2 of \cite{H21}.
\end{remark}

The proof of Theorem \ref{thm: q<1} follows the ideas of Basu and Bhatnagar \cite{BB17}, who studied the longest increasing subsequence for Mallows permutations. A strong law of large numbers for the number of cycles was established by Pitman and Tang \cite{PT19}, who studied a much more general class of examples. While this result does not appear to have been written down in the literature, it follows from known ideas. We give a proof of this theorem for completeness.

The proof of Theorem \ref{thm: q>1} is more involved and requires some new ideas. In particular, while the Mallows distribution of parameter $q$ can be related to the Mallows distribution of parameter $q^{-1}$ by reversing the permutation, this significantly affects the cycle structure. 

We use a two-sided stationary regenerative process constructed by Gnedin and Olshanski \cite{GO12}, which we call the \emph{stationary Mallows process}. We then study a related regenerative process which we call the \emph{symmetric process}, where the regeneration times are given as the times where both sides of the stationary Mallows process simultaeneously regenerate. This corresponds to a decomposition $w=v_0\dotsm v_k$, where $v_0$ sends some interval $[s_0+1,n-s_0]$ to itself, and $v_i$ sends $[s_i+1,s_{i-1}]$ to $[n-s_{i-1}+1,n-s_{i}]$ and vice versa. Moreover, the different $v_i$ will be independent due to the regenerative structure of the stationary Mallows process. Since the cycles are entirely restricted to some $v_i$, we can thus obtain Theorem \ref{thm: q>1} using a central limit theorem for regenerative processes.

The biggest technical challenge in the proof is to obtain the necessary moment estimates for the regeneration times in the symmetric process. This corresponds to obtaining moment estimates for the simultaneous return times of two independent copies of a Markov chain originally used in \cite{BB17} to study the regeneration times in the Mallows process.

\begin{remark}
	After the completion of this work, the author learned that Theorems \ref{thm: q<1} and \ref{thm: q>1} were also obtained independently by M\"uller and Verstraaten \cite{MV22} using similar methods.
\end{remark}

\subsection{Background}
The works most closely related to ours is the recent development of a theory of regenerative permutations and its application to the study of Mallows permutations. Gnedin and Olshanski \cite{GO10,GO12} constructed infinite extensions of Mallows measure to $\mathbf{N}$ and $\mathbf{Z}$. This was subsequently generalized by Pitman and Tang \cite{PT19}, who also gave a very clear account of the regenerative structure within these infinite permutations. This was in part inspired by work of Basu and Bhatnagar \cite{BB17}, who applied these ideas to study the longest increasing subsequence for Mallows permutations. Gladkich and Peled \cite{GP18} also used related ideas to study the cycle structure. These applications all rely on taking $q<1$.

This work is in some sense a continuation of this body of work. We explain what properties a permutation statistic needs to have to apply these methods when $q<1$. We also extend the method to handle the case when $q>1$, although the functions that this can be applied to are more limited, and we cannot for example say anything about the longest increasing subsequence for Mallows permutations when $q>1$.

The Mallows distribution was introduced by Mallows to study non-uniform ranked data \cite{M57}. It is perhaps the most widely used non-uniform distribution on permutations in applied statistics, see \cite{M16b} or \cite{T19} for discussion of the statistical uses for Mallows permutations. Thus, understanding the behaviour of features of Mallows permutations is an important problem. They have also seen applications to the study of one-dependent processes \cite{HHL20} and stable matchings \cite{AHHL18}.

A growing body of work studies the behaviour of Mallows permutations as the parameter $q$ varies. In general, it seems that for $q$ sufficiently close to $1$, the model behaves very similarly to the uniform permutation, while for $q$ far enough from $1$, different behaviour occurs. This has been observed for the longest increasing subsequence \cite{MS13, BP15, BB17}.

One interesting feature of the Mallows model is that in many cases, there is sharp phase transition between these two regimes. Specifically, this has been observed for the expected length of cycles \cite{GP18}. It would be interesting to see if the fluctuations of the number of cycles also have a phase transition. Our work suggests that this would be a transition from Poisson to Gaussian fluctuations.

Finally, let us mention some other recent work on Mallows permutations. The Mallows distribution appeared as the induced distribution on double cosets coming from the uniform distribution on $\GL_n(\mathbf{F}_q)$ for the Bruhat decomposition \cite{DS21}. Recent work of Pinsky \cite{P21,P20} studies pattern occurences in Mallows permutations as well as the secretary problem when the arrivals are biased to be a Mallows permutation. Finally, in \cite{ABC21}, the height of trees constructed from Mallows permutations was studied.
	
\subsection{Organization}
The paper is organized as follows. In Section \ref{sec: regen process}, we recall some important facts about regenerative and renewal processes. In Section \ref{sec: mallows process}, we introduce the main tool, the Mallows process, which is an infinite regenerative process closely related to Mallows permutations. In Section \ref{sec: moment estimates}, we show that the regeneration times of the symmetric process have finite moments, which allow us to apply tools from renewal theory. In Section \ref{sec: main}, we prove Theorems \ref{thm: q<1} and \ref{thm: q>1}.

\section{Regenerative processes}
\label{sec: regen process}
In this section, we recall some key facts about regenerative processes, including the renewal central limit theorem.

Recall that a (discrete time) regenerative process is a process $X(t):\mathbf{N}\to X$ on $\mathbf{N}$ such that there exist (random) times $T_i$ for $i\geq 1$ for which $\{X(\sum_{i\leq j} T_i+t)\}_{t\geq 0}$ is equal in distribution to $\{X(t)\}_{t\geq 0}$ for any $j$, and independent of the process before time $\sum_{i\leq j} T_i$. The times $\sum _{i\leq j}T_i$ are called the regeneration times, and the process between regeneration times, which we will denote $X_i$, are independent and identically distributed (as random variables taking values in finite sequences in $X$). 

Actually, to handle the stationary Mallows process, we will need to consider a slightly more general definition which allows the first regeneration time to have a different distribution from the remaining ones. These are called \emph{delayed} regenerative processes. In this case, we will let $T_0$ denote the first $T_i$, and $X_0$ denote the first $X_i$, and the subsequent ones we will labeled as usual. Thus, a non-delayed regenerative process simply means that $T_0=0$ and $X_0$ is empty.

We let $N(t)$ be the largest $n$ such that $\sum_{i\leq n}T_i\leq t$. Let $f:\cup X^k\to\mathbf{R}^d$, and consider the random sum $\sum_{i=1}^{N(t)}f(X_i)$. The following result of Smith establishes a central limit theorem for this sum, given some mild moment assumptions (actually the proof assumes $T_0=0$ but it is stated that the result holds without this assumption, see \cite{GW02} for example for the extension in the univariate case).
\begin{theorem}[{\hspace{1sp}\cite[Theorem 10]{S55}}]
\label{thm: regen CLT}
Suppose that $\E(T_i)=\mu<\infty$ and $\E(T_i^2)<\infty$, and that the vector $f(X_i)$ has finite first and second moments for all $i$. Then
\begin{equation*}
    \frac{\sum_{i=1}^{N(t)}f(X_i)-\alpha t}{(t/\mu)^{\frac{1}{2}}}\xrightarrow{(d)} Z,
\end{equation*}
where $Z\sim N(0,\Sigma)$, $\alpha_i=\E(f(X_1)_i)/\mu$ and
\begin{equation*}
    \Sigma_{ij}=\Cov\left(f(X_1)_i-\frac{\E(f(X_1)_i)T_i}{\mu},f(X_1)_j-\frac{\E(f(X_1)_j)T_j}{\mu}\right).
\end{equation*}
\end{theorem}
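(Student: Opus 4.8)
The plan is to treat this as a renewal--reward central limit theorem and to reduce it to an ordinary i.i.d.\ central limit theorem evaluated at a random index close to $t/\mu$. First I would dispose of the delay: since the initial block contributes only the single a.s.-finite term $f(X_0)$ and shifts the renewal times by the a.s.-finite amount $T_0$, it changes $\sum_{i=1}^{N(t)}f(X_i)$ by an $O_p(1)$ quantity, which is negligible after dividing by $t^{1/2}$. So I may assume $T_0=0$ and work with the genuinely i.i.d.\ sequence $(f(X_i),T_i)_{i\geq 1}$.

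The main device is to recenter $f$ against the renewal times. Set $\mu=\E(T_1)$, $\mu_f=\E(f(X_1))$, and $\alpha=\mu_f/\mu$ (componentwise), and define $g(X_i)=f(X_i)-\alpha T_i$, so that $\E(g(X_i))=\mu_f-\alpha\mu=0$. Writing $R_n=\sum_{i\leq n}T_i$, I then decompose
\begin{equation*}
\sum_{i=1}^{N(t)}f(X_i)-\alpha t=\sum_{i=1}^{N(t)}g(X_i)+\alpha\bigl(R_{N(t)}-t\bigr).
\end{equation*}
The residual term is controlled by the single inter-renewal time straddling $t$, namely $0\leq t-R_{N(t)}<T_{N(t)+1}$. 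Here the hypothesis $\E(T_1^2)<\infty$ enters: it forces $T_n/n^{1/2}\to 0$ a.s.\ (a Borel--Cantelli estimate, since $\sum_n\P(T_n>\epsilon\sqrt{n})<\infty$), and combined with the strong law $N(t)\sim t/\mu$ this gives $T_{N(t)+1}=o_p(t^{1/2})$. Hence $\alpha(R_{N(t)}-t)/t^{1/2}\to 0$ in probability, and the limiting law is entirely determined by the centered sum $\sum_{i=1}^{N(t)}g(X_i)$.

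It then remains to prove a central limit theorem for $\sum_{i=1}^{N(t)}g(X_i)$ with the random number of terms $N(t)$. Since the $g(X_i)$ are i.i.d., centered, with finite covariance $\Sigma=\Cov(g(X_1))$, and since $N(t)/(t/\mu)\to 1$ a.s.\ (needing only $\mu<\infty$), this is exactly the setting of Anscombe's random-index central limit theorem, whose univariate form extends to the vector case via the Cram\'er--Wold device. Applying it yields
\begin{equation*}
\frac{1}{(t/\mu)^{1/2}}\sum_{i=1}^{N(t)}g(X_i)\xrightarrow{(d)}N(0,\Sigma),
\end{equation*}
and unwinding $g(X_1)_i=f(X_1)_i-\alpha_i T_1$ with $\alpha_i=\E(f(X_1)_i)/\mu$ identifies $\Sigma_{ij}$ with the stated covariance.

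I expect the random index $N(t)$ to be the main obstacle, since one cannot simply invoke the ordinary CLT when the number of summands is itself a random, $t$-dependent quantity correlated with the summands. The two ingredients that resolve this are (i) the recentering by $g$, which makes the dominant term a sum of \emph{centered} variables so that the fluctuation of $N(t)$ about $t/\mu$ contributes no spurious drift, and (ii) Anscombe's theorem --- equivalently, a tightness argument showing that replacing $N(t)$ by its deterministic proxy $\lfloor t/\mu\rfloor$ alters the scaled sum by $o_p(1)$ --- whose applicability is guaranteed precisely by the finiteness of the second moments of $T_1$ and $f(X_1)$.
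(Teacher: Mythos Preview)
The paper does not give its own proof of this statement: it is quoted as \cite[Theorem 10]{S55} and used as a black box, with the surrounding remark only noting that Smith proved it in greater generality (continuous time, general cumulative processes). So there is nothing to compare your argument against line by line.

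That said, your proposal is a correct and self-contained proof, and it is essentially the standard modern treatment of the renewal--reward CLT (as in, e.g., Gut's \emph{Stopped Random Walks}). The key steps --- absorbing the delay as an $O_p(1)$ correction, the recentering $g(X_i)=f(X_i)-\alpha T_i$ so that $\sum_{i\leq N(t)}f(X_i)-\alpha t=\sum_{i\leq N(t)}g(X_i)+\alpha(R_{N(t)}-t)$, the Borel--Cantelli bound $T_n/\sqrt{n}\to 0$ a.s.\ from $\E(T_1^2)<\infty$ to kill the overshoot, and then Anscombe's random-index CLT (extended to vectors via Cram\'er--Wold) using $N(t)/(t/\mu)\to 1$ --- are all sound, and the identification of $\Sigma$ as $\Cov(g(X_1))$ is exactly the formula in the statement. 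Smith's original 1955 argument predates Anscombe-type packaging and is organized somewhat differently, but the substance (centering against $T_i$, controlling the boundary block, transferring the i.i.d.\ CLT through the random index) is the same; your version is arguably cleaner and more transparent about where each moment hypothesis is used.
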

\begin{remark}
Theorem \ref{thm: regen CLT} was proved in much more generality in the original paper of Smith \cite{S55}, where the result was shown for continuous time. We restrict ourselves to the case when the regeneration times are restricted to $\mathbf{N}$, and the so-called cumulative process is a pure jump process with jumps only at the regeneration times.
\end{remark}
We also require the asymptotic convergence of the moments. This theorem seems well-known, and in particular follows from Theorem \ref{thm: regen CLT} and the fact that the mean and variances for each component converge \cite[Theorem 8]{S55} (again, the proof assumes $T_0=0$ but as long as $T_0$ has finite moments, this assumption is not necessary).
\begin{theorem}[{\hspace{1sp}\cite{S55}}]
\label{thm: moment conv}
With the same setup as Theorem \ref{thm: regen CLT}, we have
\begin{equation*}
    \frac{1}{t}\E\left(\sum_{i=1}^{N(t)}f(X_i)\right)\to\alpha
\end{equation*}
and
\begin{equation*}
    \frac{1}{t}\Cov\left(\sum_{i=1}^{N(t)}f(X_i)\right)\to \frac{1}{\mu}\Sigma
\end{equation*}
as $t\to\infty$.
\end{theorem}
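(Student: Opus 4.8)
The plan is to reduce the multivariate statement to a univariate one and then run the classical renewal--reward argument through Wald's identities, treating the mean and the covariance separately. Throughout I would write $S_n=\sum_{i\le n}T_i$ for the regeneration times and use two standard facts: first, that $N(t)+1$ is a stopping time for the filtration generated by the i.i.d.\ pairs $(X_i,T_i)$, since $\{N(t)+1\ge i\}=\{S_{i-1}\le t\}$ depends only on $T_0,\dotsc,T_{i-1}$ and so is independent of $(X_i,T_i)$; and second, the elementary renewal theorem $\E(N(t))/t\to 1/\mu$ (which persists in the delayed case as long as $T_0$ has a finite moment).

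For the mean I would start from $\sum_{i=1}^{N(t)}f(X_i)=\sum_{i=1}^{N(t)+1}f(X_i)-f(X_{N(t)+1})$ and apply Wald's identity to the stopped sum, giving $\E\big(\sum_{i=1}^{N(t)+1}f(X_i)\big)=\E(N(t)+1)\,\E(f(X_1))$. Dividing by $t$ and invoking the elementary renewal theorem turns the first piece into $\alpha=\E(f(X_1))/\mu$, so it remains only to check that the straddling contribution satisfies $\E(f(X_{N(t)+1}))/t\to 0$; this is the standard renewal--reward estimate and follows from the finite first and second moment hypotheses.

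For the covariance it suffices, by polarization (equivalently the Cram\'er--Wold device), to prove for each fixed $\theta$ that $\tfrac1t\Var\langle\theta,\sum_{i=1}^{N(t)}f(X_i)\rangle\to\tfrac1\mu\,\theta^\top\Sigma\theta$, since the off-diagonal entries of the limiting matrix are recovered from the variances attached to $e_i$, $e_j$ and $e_i+e_j$. Fixing $\theta$, set $R_i=\langle\theta,f(X_i)\rangle$, $r=\E(R_1)$, and introduce the mean-zero centered reward $\tilde R_i=R_i-\tfrac r\mu T_i$. Writing $\sum_{i\le N(t)}T_i=t-T_0-\gamma(t)$ with $\gamma(t)$ the forward recurrence time, one obtains $\langle\theta,\sum_{i=1}^{N(t)}f(X_i)\rangle-\tfrac r\mu t=\sum_{i=1}^{N(t)}\tilde R_i-\tfrac r\mu(T_0+\gamma(t))$. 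The correction term has bounded second moment (using $\E(T_0^2)<\infty$ and that $\E(T_1^2)<\infty$ keeps $\gamma(t)$ bounded in $L^2$), so it contributes only $o(t)$ to the variance and, by Cauchy--Schwarz, $o(t)$ to its covariance with the main term. To the main term I would apply Wald's second-moment identity to the centered walk stopped at $N(t)+1$, namely $\E\big((\sum_{i=1}^{N(t)+1}\tilde R_i)^2\big)=\E(N(t)+1)\Var(\tilde R_1)\sim\tfrac{\sigma^2}\mu t$ with $\sigma^2=\Var(R_1-\tfrac r\mu T_1)=\theta^\top\Sigma\theta$, and then remove the straddling cycle via $\sum_{i=1}^{N(t)}\tilde R_i=\sum_{i=1}^{N(t)+1}\tilde R_i-\tilde R_{N(t)+1}$.

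The hard part is precisely this last step: controlling the length-biased straddling cycle by showing $\E(\tilde R_{N(t)+1}^2)=o(t)$, after which the cross term is $o(t)$ by Cauchy--Schwarz. I would obtain it by conditioning on the last renewal at or before $t$, writing $\E(\tilde R_{N(t)+1}^2)=\sum_{s\le t}\E\big(\tilde R_1^2\,\mathbf 1[T_1>t-s]\big)\,U(\{s\})$, where $U(\{s\})=\sum_n\P(S_n=s)$ is the renewal mass. Since $\E(\tilde R_1^2)<\infty$ forces $\E(\tilde R_1^2\,\mathbf 1[T_1>u])\to 0$ as $u\to\infty$, while $U(\{s\})$ stays bounded by Blackwell's theorem, a Ces\`aro argument gives $o(t)$. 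Assembling the pieces yields $\tfrac1t\Var(\sum_{i=1}^{N(t)}\tilde R_i)\to\sigma^2/\mu$ and hence the claimed limit. As an alternative, since Theorem \ref{thm: regen CLT} already supplies convergence in distribution, one could instead upgrade to convergence of the covariance by proving uniform integrability of the squared normalized sums; but that uniform integrability rests on the same Wald second-moment bound, so the direct computation above seems cleanest.
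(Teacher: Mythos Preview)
The paper does not actually prove this statement: it simply records it as a consequence of \cite[Theorem~8]{S55} (univariate mean and variance convergence for cumulative renewal-reward processes) together with Theorem~\ref{thm: regen CLT}, the multivariate case following by polarization. Your proposal supplies a full direct argument rather than a citation, and the line you take---reduce to the scalar case via Cram\'er--Wold/polarization, center the reward as $\tilde R_i=R_i-\frac{r}{\mu}T_i$, apply Wald's first and second identities at the stopping time $N(t)+1$, and control the straddling cycle by conditioning on the last renewal and invoking Blackwell---is exactly the classical renewal-reward computation underlying Smith's result. So your approach is correct and is effectively an unpacking of what the paper cites; the only thing it buys over the paper's treatment is self-containment.

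One small point worth tightening if you write this out in full: in the delayed setting the indexing of the filtration and the identity $\{N(t)+1\ge i\}=\{S_{i-1}\le t\}$ need $T_0$ folded in carefully (the paper's $N(t)$ counts from the delayed start), and Wald's identities should be applied to the i.i.d.\ block $(X_i,T_i)_{i\ge1}$ with the $T_0,X_0$ contribution handled separately as another $O(1)$ correction. This is routine under the standing assumption that $T_0$ has finite first and second moments, but as written your sketch silently mixes the delayed and non-delayed conventions.
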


We will also need to ensure that no single $T_i$ can cause any issues. For this, we require the following limit result for the length of the interval containing $n$, which goes to the size-bias distribution, see for example \cite[Eq. 5.70]{G13}.

\begin{proposition}
\label{prop: size-bias}
Let $T^{(n)}$ be the length of the interval containing $n$. Then $T^{(n)}\xrightarrow{(d)} T^*$ as $n\to\infty$, where $T^*$ has the size-bias distribution with respect to the $T_i$. That is, $T^*$ has the unique distribution such that $\E(f(T^*))=\E(T_1f(T_1))$ for all $f$ where the expectations make sense.
\end{proposition}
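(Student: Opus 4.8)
The plan is to recognize this as the classical \emph{inspection paradox} (or waiting-time paradox) of renewal theory: the block straddling a fixed far-away point $n$ is length-biased, because longer blocks are more likely to contain $n$. I would prove it by reducing to the discrete renewal theorem. Write $S_j=\sum_{i\le j}T_i$ for the regeneration times (with $T_0$ the possibly-delayed first interval), so that $\mathbf{N}$ is partitioned into blocks $I_j=\{S_{j-1}+1,\dotsc,S_j\}$, and $T^{(n)}$ is the length of the unique block $I_j$ containing $n$. Let $u(m)=\P(m=S_j\text{ for some }j\ge 0)$ denote the renewal mass function of this (delayed) process.

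First I would obtain an exact expression for $\P(T^{(n)}=k)$ by decomposing over the left endpoint $a=S_{j-1}$ of the block containing $n$. Since $n\in I_j$ with $|I_j|=k$ forces $a\in\{n-k,\dotsc,n-1\}$, and since conditionally on a regeneration at any $a=S_j$ with $j\ge 0$ the following interval length is an independent copy of $T_1$, the regenerative property gives
\begin{equation*}
\P(T^{(n)}=k)=\P(T_1=k)\sum_{a=n-k}^{n-1}u(a)+o(1),
\end{equation*}
where the $o(1)$ accounts for the single boundary term coming from the delayed initial block $I_0$ (which is relevant only when $n\le k$, hence negligible for fixed $k$ as $n\to\infty$).

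Next I would apply the discrete renewal theorem. Assuming $\mu=\E(T_1)<\infty$, one has $u(a)\to \mu^{-1}$ as $a\to\infty$ in the aperiodic case, so the sum of these $k$ consecutive terms converges to $k/\mu$ and
\begin{equation*}
\P(T^{(n)}=k)\longrightarrow \frac{k\,\P(T_1=k)}{\mu}.
\end{equation*}
This is precisely the size-bias distribution, since $\sum_k f(k)\tfrac{k\P(T_1=k)}{\mu}=\mu^{-1}\E(T_1 f(T_1))$; pointwise convergence of probability mass functions on $\mathbf{N}$ to a probability mass function upgrades automatically to convergence in distribution (indeed in total variation).

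The one point requiring genuine care — the natural obstacle here, given the parity phenomena appearing elsewhere in the paper — is periodicity: if the $T_i$ are supported on $d\mathbf{N}$ then $u(a)$ does not converge but instead tends to $d/\mu$ along multiples of $d$ and to $0$ otherwise. This does not affect the conclusion, however: both $\P(T_1=k)$ and $T^{(n)}$ are then supported on $d\mathbf{N}$, and for $d\mid k$ the window $\{n-k,\dotsc,n-1\}$ contains exactly $k/d$ multiples of $d$, so $\sum_{a=n-k}^{n-1}u(a)\to (k/d)(d/\mu)=k/\mu$ exactly as before. The remaining checks are routine: the delayed renewal process satisfies the hypotheses of the renewal theorem (which require only $\mu<\infty$, independent of the law of $T_0$), and one must settle the bookkeeping for the half-open boundary convention at the endpoints of the blocks.
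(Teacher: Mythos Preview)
Your argument is correct and is precisely the standard derivation of the inspection paradox via the discrete renewal theorem. The paper itself does not give a proof of this proposition at all; it simply quotes it as a well-known fact from renewal theory (citing \cite[Eq.~5.70]{G13}), so there is nothing to compare against beyond noting that yours is the textbook argument. As a minor aside, the statement in the paper omits the normalizing factor $\mu^{-1}=\E(T_1)^{-1}$ in the size-bias relation; your limiting mass function $k\,\P(T_1=k)/\mu$ is the correct one.
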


\section{Mallows process}
\label{sec: mallows process}
In this section, we recall the construction of the Mallows process on both $\mathbf{N}$ and $\mathbf{Z}$, which is thus named because we want to view it as a discrete time stochastic process. The analysis of Mallows permutations when $q<1$ is fairly straightforward using the regenerative properties of this process. The analysis when $q>1$ is more interesting, and requires the use of the process on $\mathbf{Z}$, started at stationarity. We then study a related regenerative process we call the symmetric process, and give some moment estimates needed to apply the renewal theory arguments to obtain Theorem \ref{thm: q>1}.

The Mallows process was defined by Gnedin and Olshanski \cite{GO10}, who later extended the definition to a two-sided process \cite{GO12}. A more general notion of regenerative permutation on $\mathbf{N}$ or $\mathbf{Z}$ was introduced in \cite{PT19}. These ideas were first used by Basu and Bhatnagar \cite{BB17} to study the longest increasing subsequence for Mallows permutations (with related ideas appearing in \cite{GP18}), and then in \cite{H21} to study descents in $w$ and $w^{-1}$. The basic properties of the Mallows process are reviewed here without proof. We give references to places where more details can be found.

\subsection{Mallows process}
\begin{figure}
    \centering
    \includegraphics{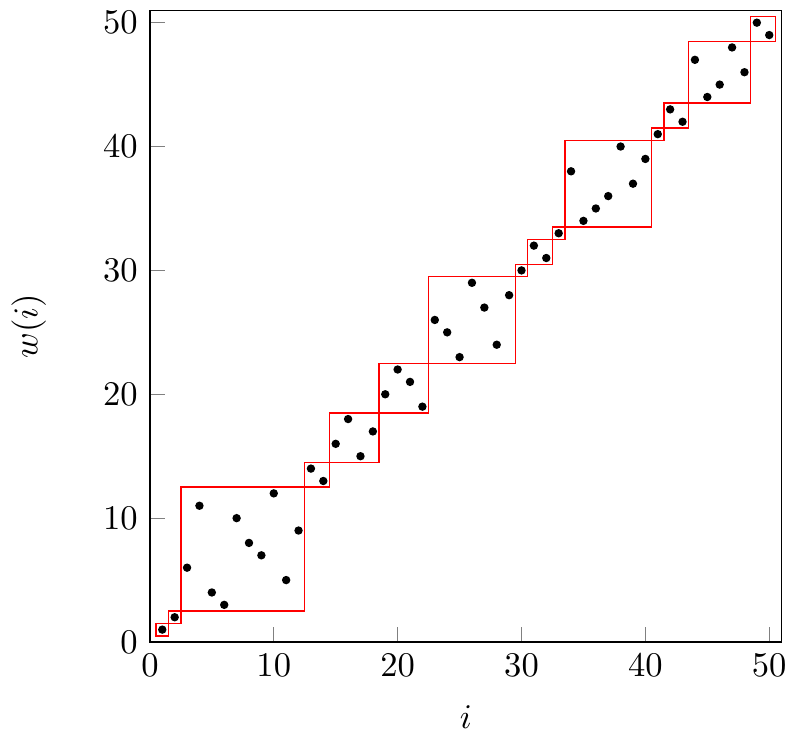}
    \caption{Mallows process $\widehat{w}$ on $\mathbf{N}$ is shown as black dots. Red squares mark excursions. The $T_i$ are the lengths of the red squares, and the $w_i$ are the permutations within each red square.}
    \label{fig: mallows process}
\end{figure}

Fix $0<q<1$. The \emph{Mallows process} is a random permutation $w:\mathbf{N}\to\mathbf{N}$ defined as follows. Set $\widehat{w}(1)=i$ with probability $q^{i-1}(1-q)$ and given $\widehat{w}(1), \dotsc, \widehat{w}(i)$, set $\widehat{w}(i+1)=k\in \mathbf{N}\setminus \{\widehat{w}(1),\dotsc,\widehat{w}(i)\}$ with probability $q^{k'-1}(1-q)$ where $k'-1$ is the number of elements in $\mathbf{N}\setminus \{\widehat{w}(1),\dotsc,\widehat{w}(i)\}$ less than $k$. This is almost surely a bijection.

Given any $n\in\mathbf{N}$, the relative order of $\widehat{w}(1),\dotsc,\widehat{w}(n)$ as a random element in $S_n$ is Mallows distributed, and more generally the same is true for the relative order of $\widehat{w}(i)$ for $i\in [a,b]$. We will denote the permutation in $S_{b-a+1}$ induced by this relative order $w^{a,b}$. Thus, it often suffices to study the Mallows process, which has a regenerative structure that we now describe.

Let $T_1$ be the first time $n$ that $\widehat{w}(i)\leq n$ for all $i\leq n$. In other words, this is the first $n$ for which $w(1),\dotsc,w(n)$ defines a permutation of $1,\dotsc,n$. Then the process $(\widehat{w}(T_1+k)-T_1)_{k\geq 1}$ is equal in distribution to $(\widehat{w}(k))_{k\geq 1}$.

Similarly, let $T_k+\dots+T_1$ be the $k$th time that this occurs. Let $w_1$ be the permutation induced by $\widehat{w}(1),\dotsc,\widehat{w}(T_1)$ and let $w_k$ be the permutation induced by $\widehat{w}(T_{k-1}+1),\dotsc,\widehat{w}(T_k)$ for $k\geq 1$. Then it's clear that the $T_k$ and the $w_k$ are independent and identically distributed. Call the $w_k$ \emph{excursions} in the Mallows process and $T_k$ their \emph{sizes}. Moreover, the times $T_k$ are the renewal times of a renewal process. See Figure \ref{fig: mallows process} for an example of this process up until $t=50$. 

The cycles of $w$ must lie in the irreducible components. Thus we can study $w^{1,n}$, which has the same distribution, and instead count cycles within each $w_i$ contained in $[1,n]$. Standard renewal theory arguments will then establish Theorem \ref{thm: q<1}, as long as the moments of $T_k$ are finite. This is the content of the next proposition.

\begin{proposition}[{\hspace{1sp}\cite[Proposition 3.4]{H21}}]
\label{prop: T_i estimates}
	We have $\E(T_i^k)<\infty$ for all $i>0$.
\end{proposition}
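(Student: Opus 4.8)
The plan is to show that the excursion size $T_1$ has all finite moments by establishing exponential tail decay, i.e. that $\P(T_1 > n)$ decays exponentially in $n$. Since finiteness of all moments follows immediately from exponential tails via the identity $\E(T_1^k) = \sum_{n} ((n+1)^k - n^k)\P(T_1 > n)$ and the fact that polynomials are dominated by exponentials, the entire task reduces to this tail bound. Recall that $T_1 > n$ means that the first $n$ values $\widehat w(1), \dotsc, \widehat w(n)$ do not yet form a permutation of $\{1, \dotsc, n\}$; equivalently, some value strictly greater than $n$ has already been placed among the first $n$ positions, or some value in $\{1, \dotsc, n\}$ has not yet been placed. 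So I would look for an event that forces this to \emph{fail} with probability bounded below, independently across blocks.

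First I would recall the basic feature of the Mallows process that drives the decay: at each step $i+1$, the value $\widehat w(i+1)$ is chosen to be the $k'$-th smallest available element with probability $q^{k'-1}(1-q)$, so (since $q < 1$) there is a strong bias toward choosing small available values. Concretely, $\widehat w(i+1)$ equals the smallest currently-available natural number with probability exactly $1-q$, uniformly over the history. The key structural observation is that $T_1 > n$ can only happen if, roughly speaking, the process keeps ``reaching upward'' and fails to fill in all small values by time $n$. I would quantify this by controlling the quantity $M_i := \max\{\widehat w(1), \dotsc, \widehat w(i)\} - i$, the ``overshoot'' of the largest placed value beyond the position index; a regeneration at time $n$ is exactly the event $M_n = 0$. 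The goal is to argue that $M_i$ behaves like a positive-recurrent Markov-type quantity with a negative drift and light-tailed increments, so that its return time to $0$ has exponential tails.

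The cleanest route, and the one I expect the paper to take, is to observe that placing the smallest available value (probability $\geq 1-q$ at every step regardless of the past) decreases the pool of unplaced small values, while the probability of placing a value that is $j$ larger than necessary is at most $q^{j}$, giving geometric control on upward jumps of $M_i$. Thus $M_i$ is dominated by a random walk with a uniformly negative drift and geometric-tailed positive increments, for which the first passage time to $0$ has an exponentially decaying tail by a standard Lyapunov/Foster argument (e.g. showing $\E(s^{M_{i+1}} \mid \mathcal F_i) \leq \rho\, s^{M_i}$ for some $s > 1$ and $\rho < 1$ when $M_i > 0$, which yields a geometric supermartingale). I would then conclude $\P(T_1 > n) \leq C\lambda^n$ for some $\lambda < 1$, hence $\E(T_1^k) < \infty$ for every $k$.

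The main obstacle is making the Markovian domination of $M_i$ rigorous: the exact transition law of the Mallows process at step $i+1$ depends on the full set of currently-available values (their gaps), not merely on $M_i$, so $M_i$ is not literally a Markov chain. The technical heart of the argument is therefore to verify that, conditioned on \emph{any} history with $M_i > 0$, the one-step drift of $M_i$ is bounded above by a strictly negative constant and the positive increments are stochastically dominated by a fixed geometric distribution uniformly over that history. Once this uniform drift-and-tail control is in place, the exponential tail bound on the return time to $0$, and hence Proposition \ref{prop: T_i estimates}, follows from the standard supermartingale argument; since the statement is cited from \cite[Proposition 3.4]{H21}, I would expect the proof there to carry out precisely this uniform drift estimate.
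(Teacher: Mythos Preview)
The paper does not prove this proposition itself but cites it from \cite{H21}; however, the method used there is reproduced in Section~\ref{sec: moment estimates} for the doubled chain (Lemma~\ref{lem: finite return time}), so one can compare against that. Your approach is genuinely different: you aim for exponential tails of $T_1$ via a Foster--Lyapunov drift argument on $M_i$, whereas the paper's approach is an induction on $k$ that bootstraps finiteness of the $(k+1)$th moment from the $k$th via the identity $\P_\mu(R_0=t-1)=\mu_0\P_0(R_0^+\geq t)$ (multiplying by $t^k$, summing, and recognizing a Faulhaber polynomial in $R_0^+$), together with a bound on $\E_\mu(R_0^k)$ coming from the explicit stationary distribution. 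Your route gives a stronger conclusion (exponential tails) and is conceptually clean once the drift condition is checked; the paper's route is more bare-handed and avoids any geometric-ergodicity machinery, at the cost of only yielding finiteness of moments.

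One correction that actually helps you: your ``main obstacle'' is not an obstacle at all. The process $M_i=\max_{j\leq i}\widehat w(j)-i$ \emph{is} a genuine Markov chain, with transition $M_{i+1}=\max(M_i,Z_i)-1$ for i.i.d.\ geometric $Z_i$; this is stated explicitly in Section~\ref{sec: moment estimates} and is easy to verify, since the available values below the running maximum number exactly $M_i$ and those above it are consecutive. So there is no need for a uniform-over-history domination argument: you can run the Lyapunov computation directly on this explicit chain (take $V(m)=s^m$ with $1<s<q^{-1}$ and check $\E(s^{\max(m,Z)-1})\leq \rho s^m$ for large $m$), and conclude exponential tails for the return time to $0$, hence all moments.
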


\subsection{Stationary Mallows process}
The stationary Mallows process should be thought of as the Mallows process started from stationarity, in the sense that $\widehat{w}(i)-i$ is a stationary process. It was first constructed in \cite{GO12}, although they did not exploit the regenerative structure. Later, in \cite{PT19}, the fact that this process can be viewed as the stationary version of the Mallows process was explained and the usual renewal-theoretic consequences were given.

For example, stationary Mallows process can be obtained by running the Mallows process for a long time, see \cite[Lemma 3.1]{PT19}. We do not give a formal construction as it is quite a bit more involved than the Mallows process, but merely take for granted the existence of a process $\widehat{w}$ with certain desired properties. See \cite{GO12} and \cite[Section 3]{PT19} for details and proofs.

Let $\widehat{w}$ denote the \emph{stationary Mallows process}, as defined in \cite{GO12}. This is a random bijection $\mathbf{Z}\to\mathbf{Z}$. This process again has a regenerative structure. We let $T_0$ denote the length of $w_0$, the irreducible permutation containing $0$, and we let $T_i$ for $i\in\mathbf{Z}$ denote the lengths of the $i$th irreducible permutation before or after $w_0$, denoted $w_i$. Then the $T_i$ and $w_i$ are independent for different $i$, and for $i\neq 0$ they are identically distributed according to the same distribution as for the Mallows process described previously. We note that $T_0$ and $w_0$ do not have the same distribution as the other $T_i$ and $w_i$. The size of the first excursion $T_0$ has the size-biased distribution with respect to the distribution of the other $T_i$, as is the case for any stationary renewal process. Thus, Proposition \ref{prop: T_i estimates} immediately extends to the stationary Mallows process.

\begin{proposition}
Let the $T_i$ denote the sizes of the excursions in the stationary Mallows process. Then $\E(T_i^k)<\infty$ for all $i\in\mathbf{Z}$.
\end{proposition}

Again, $w^{a,b}\in S_{b-a+1}$ is Mallows distributed of parameter $q$. This can be seen from the fact that this process is $q$-exchangeable (see comments after Theorem 6.1 in \cite{GO12} for example).

Finally, we remark that $-\widehat{w}(-i)$ has the same distribution as $\widehat{w}$. Thus, the stationary Mallows process is also time-reversal invariant.

\subsection{The symmetric process}
\begin{figure}
    \centering
    \includegraphics{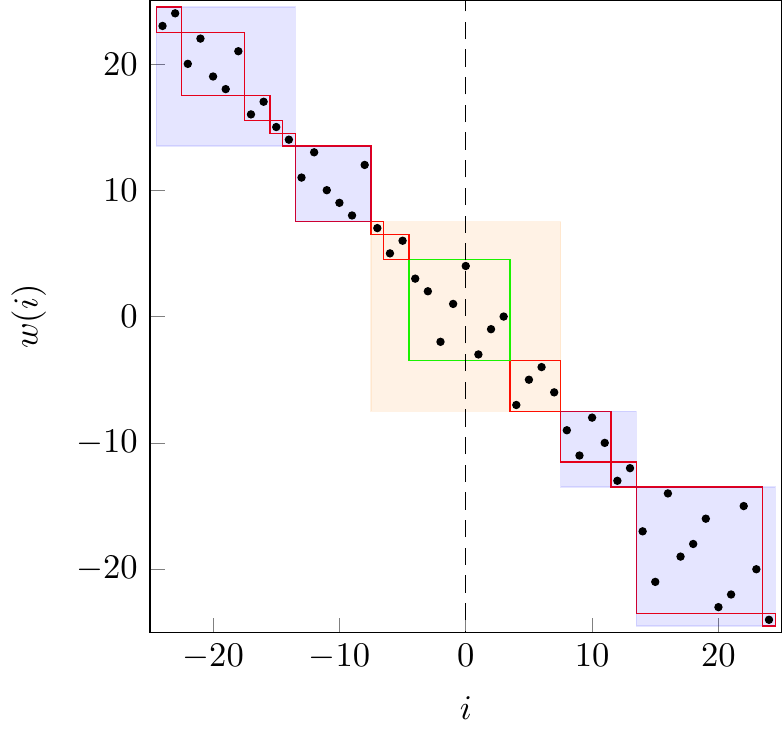}
    \caption{The reversed stationary Mallows process $\widehat{w}^{rev}$ is shown as black dots. Here, we consider the odd case, which is centered at $0$, marked by the dashed line. Red squares mark the excursions in the stationary Mallows process $\widehat{w}$, with the green square denoting the excursion containing $0$. The orange shaded square denotes the first regeneration in the symmetric process, $S_0$ is the length of the orange shaded square, $v_0$ is the permutation contained in the orange shaded square. The blue shaded squares, paired symmetrically, denote the subsequent regenerations in the symmetric process, $S_i$ is the sum of the lengths of a symmetric pair of blue shaded squares, and $v_i$ is the permutation contained in a symmetric pair of blue shaded squares.}
    \label{fig: sym process odd}
\end{figure}

Note that all the processes we have defined are for $q<1$. In this section, we define the symmetric process, which is essentially just a different regenerative structure on $\widehat{w}$, which will be essential in studying the cycle structure when $q>1$.

For a Mallows permutation $w$, the reversal permutation $w^{rev}(i)=w(n-i+1)$ is Mallows distributed with parameter $q^{-1}$. Thus, we would wish to study the analogous Mallows process given by $\widehat{w}^{rev}=\widehat{w}(-i)$. The relative order of $\widehat{w}^{rev}(i)$ for $a+1\leq i\leq b$ is Mallows distributed in $S_{b-a}$ of parameter $q^{-1}$. However, it is no longer true that $w_i$ is irreducible, and so it is no longer possible to study the cycle structure using the same renewal structure. This is what motivates the following construction. A related idea was proposed in \cite{GP18}. 

We will need slightly different constructions depending on whether we wish to study Mallows permutations for $n$ even or $n$ odd. We begin with $n$ odd. We refer the reader to Figure \ref{fig: sym process odd} as a visual guide for the construction.

Let $T_0^+$ and $T_0^-$ be the length of $w_0$ to the right and left of $0$ respectively, so $1+T_0^++T_0^-=T_0$. In Figure \ref{fig: sym process odd}, this is the length of the green square to the left and right of $0$, so $T_0^+=3$ and $T_0^-=4$.

Let $k_0^-$ and $k_0^+$ be the first times that $T_0^++\sum_{i=1}^{k_0^+} T_i=T_0^-+\sum_{i=1}^{k_0^-}T_{-i}$. In Figure \ref{fig: sym process odd}, these are the number of red squares to the left and right of the green square in the orange shaded square, so $k_0^-=2$ and $k_0^+=1$.

Let $v_0$ be the permutation obtained by concatenating $w_i$ for $-k_0^-\leq i\leq k_0^+$, and let $S_0$ denote its length. In Figure \ref{fig: sym process odd}, $v_0$ is the permutation contained in the orange shaded square, and $S_0$ is the length of the orange shaded square, so $S_0=15$. It can be seen that $v_0$ is simply the first time that $\widehat{w}^{rev}$ sends $[-i,i]$ to itself.

Now given $k_i^-$, $k_i^+$, inductively define $k_{i+1}^-$ and $k_{i+1}^+$ as the smallest integers such that $\sum _{i=k_{i}^-+1}^{k^-_{i+1}}T_{-i}=\sum_{i=k_i^++1}^{k_{i+1}^+}T_i$, and let $S_{i+1}=2\sum _{i=k_{i}^-+1}^{k^-_{i+1}}T_{-i}$. In other words, we are waiting for the first time after $S_i$ that $\widehat{w}^{rev}$ sends the interval $[S_{i}/2,S_{i+1}/2]$ to $-[S_{i}/2,S_{i+1}/2]$ and vice versa. In Figure \ref{fig: sym process odd}, note that the blue shaded squares are paired on the left and right, each having the same size. The $k_i^-$ and $k_i^+$ are the number of red squares in the left and right blue shaded squares, counting out from the middle, so $k_1^-=1$, $k_1^+=2$, $k_2^-=5$, and $k_2^+=2$. The $S_i$ are the sum of the lengths of the corresponding blue shaded squares, so $S_1=12$ and $S_2=22$

Let $v_{i}$ denote the permutation of $-[S_{i-1}/2,S_{i}/2]\cup [S_{i-1}/2,S_{i}/2]$ defined by restricting $\widehat{w}^{rev}$ on this interval. For $i=1$, we instead use $-[(S_0-1)/2,S_{1}/2]\cup [(S_0-1)/2,S_{1}/2]$ as $S_0$ is always odd. In Figure \ref{fig: sym process odd}, the $v_{i}$ correspond to the permutations contained in a symmetric pair of blue shaded squares, counting out from the middle.

The utility of this construction is that the cycle structure of $v^{-n,n}$, which we will use to denote the relative order of $\widehat{w}^{rev}(i)$ for $i\in [-n,n]$, can be obtained from the cycle structures of the $v_i$, exactly analogous to how for $q<1$ they could be obtained from the $w_i$.

The following proposition formalizes the fact that the $v_i$ and $S_i$ form a regenerative structure.

\begin{proposition}
\label{prop: sym renewal odd}
The process $(S_i,v_i)$ is regenerative in the following sense. The $(S_i,v_i)$ are independent, and except for $i=0$, the $S_i$ are identically distributed, and the $v_i$, relabeled so they form a permutation of $\{1,\dotsc, S_i\}$, are also identically distributed.
\end{proposition}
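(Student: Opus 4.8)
The plan is to establish the claimed independence and identical distribution by reducing everything to the regenerative structure of the underlying stationary Mallows process $\widehat{w}$, which we already know from the previous subsection: the excursion data $(T_i, w_i)$ for $i \in \mathbf{Z}$ are independent, and for $i \neq 0$ identically distributed, while $(T_0, w_0)$ is the distinguished excursion containing $0$. The key observation is that the symmetric process $(S_i, v_i)$ is built \emph{deterministically} from the two-sided sequence of excursion data by the stopping-time recipe defining the $k_i^\pm$. So I would first show that the indices $k_i^\pm$ are stopping times with respect to a natural two-sided filtration, and then feed this into the strong Markov / renewal structure of $\widehat{w}$.

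Concretely, first I would observe that the recipe for $(k_{i+1}^-, k_{i+1}^+)$ depends only on the partial sums of $\{T_{-j}\}_{j \geq 1}$ and $\{T_j\}_{j \geq 1}$: we walk outward on both sides and stop at the first moment the two running totals (on the left and on the right, started from the current positions $k_i^-$ and $k_i^+$) agree. Thus $v_{i+1}$ is a function of the excursions $w_j$ for $k_i^+ < j \leq k_{i+1}^+$ and $w_{-j}$ for $k_i^- < j \leq k_{i+1}^-$ only, and the pair $(k_{i+1}^-, k_{i+1}^+)$ is measurable with respect to the excursion data read out to those indices. The matching condition $\sum_{j=k_i^-+1}^{k_{i+1}^-} T_{-j} = \sum_{j=k_i^++1}^{k_{i+1}^+} T_j$ is exactly a simultaneous-return event for the two one-sided renewal sequences.

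Second, I would invoke the strong Markov property for renewal sequences twice — once on the left, once on the right — using the fact that after the stopping index $k_i^\pm$, the future excursion data $(T_{k_i^+ + m}, w_{k_i^+ + m})_{m \geq 1}$ and $(T_{-k_i^- - m}, w_{-k_i^- - m})_{m \geq 1}$ are independent of the past and each i.i.d.\ with the common excursion law (this holds because the $(T_i, w_i)$ are genuinely independent across $i$, so conditioning on a stopping-index-measurable event does not disturb the strictly-later data). Since the rule producing $(S_{i+1}, v_{i+1})$ from the post-$k_i^\pm$ data is the same functional for every $i \geq 0$, the pairs $(S_i, v_i)$ for $i \geq 1$ are i.i.d., and each is independent of the past $(S_0, v_0), \dotsc, (S_{i-1}, v_{i-1})$. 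The distinguished pair $(S_0, v_0)$ has a different law because it absorbs the central excursion $w_0$, whose size $T_0$ is size-biased; this is why only the $i \neq 0$ terms are claimed identically distributed. The relabeling of $v_i$ to a permutation of $\{1,\dotsc,S_i\}$ is harmless since it is a fixed deterministic shift.

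The main obstacle I anticipate is verifying cleanly that the outward double-stopping procedure yields a bona fide \emph{stopping time for the joint (left,right) data} so that the two-sided strong Markov argument applies without circularity — in particular, one must check that the simultaneous-return index is almost surely finite and that the two sides can be decoupled correctly, since the stopping rule couples them through the equality of partial sums. I would handle this by treating the left and right increment sequences as a single Markov chain on the difference of partial sums (the chain used in \cite{BB17}), whose return to $0$ defines the regeneration; finiteness a.s.\ follows from recurrence of this difference walk, and the i.i.d.\ structure of $(S_i,v_i)$ then follows from the i.i.d.\ excursions between successive returns. The finiteness of moments of $S_i$ is a separate matter deferred to Section~\ref{sec: moment estimates}.
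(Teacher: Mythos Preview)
Your proposal is correct and follows essentially the same route as the paper: both arguments use that $(S_{i+1},v_{i+1})$ is a fixed deterministic functional of the post-$k_i^\pm$ excursion data, which by the regenerative structure of $\widehat{w}$ is independent of the past and has the same law for every $i\geq 0$. The paper's proof is simply a two-line induction invoking ``functions of the renewed process'' without spelling out the stopping-time and strong-Markov details you supply; your elaboration of the joint stopping rule and the potential subtlety of coupling the two sides is sound but goes beyond what the paper writes down.
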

\begin{proof}
We proceed by induction, showing each $(S_i,v_i)$ is independent of all previous $(S_j,v_j)$ for $j<i$. The base case is trivial. Given that the $(S_i,v_i)$ are independent for $i\leq k$, we note that $S_{k+1}$ and $v_{k+1}$ are functions of the renewed process, and so are independent of the $(S_i,v_i)$ for $i\leq k$. Finally, since $S_i$ and $v_i$ are the same functions applied to the renewed process, they have the same distribution.
\end{proof}

\begin{figure}
    \centering
    \includegraphics{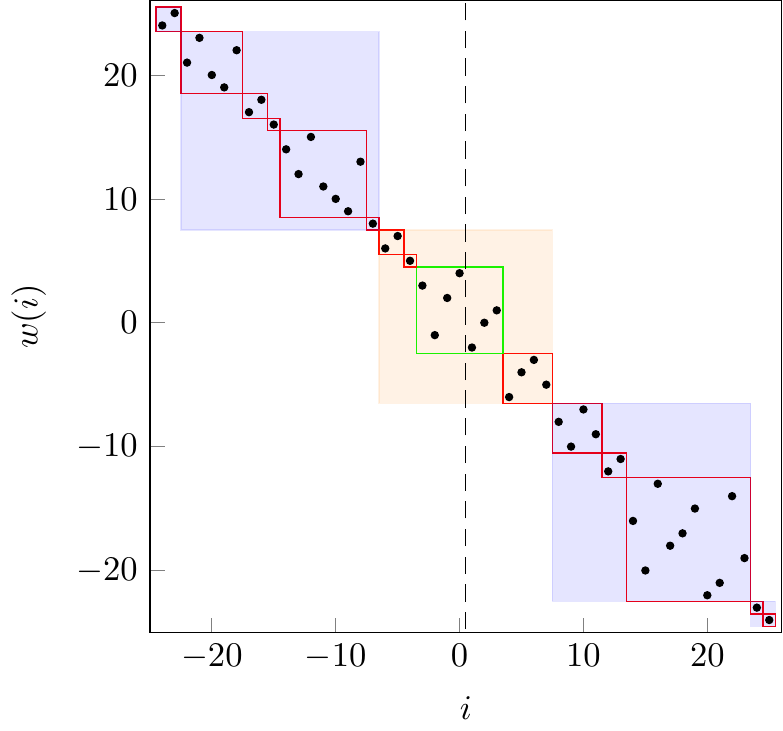}
    \caption{The reversed stationary Mallows process $\widehat{w}^{rev}$ is shown as black dots. Here, we consider the even case, which is centered at $1/2$, marked by the dashed line. Red squares mark the excursions in the stationary Mallows process $\widehat{w}$, with the green square denoting the (possibly empty) excursion containing $1/2$. The orange shaded square denotes the first regeneration in the symmetric process, $S_0$ is the length of the orange shaded square, $v_0$ is the permutation contained in the orange shaded square. The blue shaded squares, paired symmetrically, denote the subsequent regenerations in the symmetric process, $S_i$ is the sum of the lengths of a symmetric pair of blue shaded squares, and $v_i$ is the permutation contained in a symmetric pair of blue shaded squares.}
    \label{fig: sym process even}
\end{figure}

When $n$ is even, we slightly modify the construction above. Figure \ref{fig: sym process even} gives a visual guide to the construction, which is almost identical except that the line of symmetry is at $1/2$ rather than $0$. Since the construction is extremely similar, we only explain the differences in detail.

We wait until the first time that $\widehat{w}^{rev}$ sends $[-n+1,n]$ to itself, and we let $v_0$ be the permutation of this interval that this induces. However, if $\widehat{w}^{rev}$ has a renewal at $1$ (in other words, if $\widehat{w}$ had a renewal at $0$), we simply take $v_0$ to be the empty permutation. Let $S_0$ denote the size of $v_0$. Essentially, the difference is that we center the reversal around $1/2$ rather than $0$. See Figure \ref{fig: sym process even}, where the dashed line is now at $1/2$ rather than at $0$.

We then define $S_i$ in exactly the same way, except we consider when $\widehat{w}^{rev}$ sends the interval $[a,b]$ to $-[a-1,b-1]$ rather than $-[a,b]$, again to account for the fact that everything is centered around $1/2$. The following proposition states that this process also has a regenerative structure, and has the same proof as Proposition \ref{prop: sym renewal odd}. We will use the same notation for both these constructions, since they are essentially the same. In fact, the distributions of the $S_i$ and $v_i$ are identical, except for when $i=0$, since they depend on the process after a renewal.
\begin{proposition}
	\label{prop: sym renewal even}
	The process $(S_i,v_i)$ is regenerative in the following sense. The $(S_i,v_i)$ are independent, and except for $i=0$, the $S_i$ are identically distributed, and the $v_i$, relabeled so they form a permutation of $\{1,\dotsc, S_i\}$, are also identically distributed.
\end{proposition}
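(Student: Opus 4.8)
The plan is to follow verbatim the inductive argument given for Proposition~\ref{prop: sym renewal odd}, since the even construction differs from the odd one only in where the axis of symmetry is placed. I would first record the one structural input that drives everything: in the stationary Mallows process the excursions $(T_i,w_i)$ for $i\neq 0$ are mutually independent and identically distributed, and are independent of $(T_0,w_0)$. Both the odd and even symmetric processes are built by grouping consecutive excursions of $\widehat{w}$ outward from the centre, declaring a regeneration precisely at the (complete) excursion boundaries where the cumulative left and right excursion lengths first agree (for the even case, agree after the $1/2$-shift encoded by sending $[a,b]$ to $-[a-1,b-1]$). The essential point is therefore that each symmetric regeneration time is a boundary between whole excursions of $\widehat{w}$, so that $S_{k+1}$ and $v_{k+1}$ are measurable functions of only those excursions lying strictly beyond the $k$th symmetric regeneration.

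Granting this, the induction runs exactly as before. The base case $(S_0,v_0)$ is vacuous, and assuming $(S_j,v_j)_{j\leq k}$ are independent, the excursions beyond the $k$th regeneration are, by the independence of the $(T_i,w_i)$, independent of all excursions used to form $(S_j,v_j)$ for $j\leq k$; hence $(S_{k+1},v_{k+1})$, being a function of the former, is independent of the latter. Moreover these beyond-excursions are themselves an i.i.d. sequence of standard excursions on each side, so applying the same defining rule to them yields a pair with the same law as $(S_1,v_1)$, giving the claimed identical distribution for $i\neq 0$ once the $v_i$ are relabelled as permutations of $\{1,\dots,S_i\}$. The $i=0$ term is exceptional because the excursion straddling the centre (which here may be empty) carries the size-biased law rather than the standard one.

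The only place needing genuine care---the main obstacle---is verifying that the modified centring does not spoil the ``whole excursion'' property. Concretely, I would check that when the axis sits at $1/2$ the convention of matching $[a,b]$ with $-[a-1,b-1]$ still forces the symmetric regeneration to fall at joint excursion boundaries of $\widehat{w}$ on the two sides, and that the special rule making $v_0$ empty when $\widehat{w}$ has a renewal at $0$ is consistent with this. Once that bookkeeping is confirmed the regenerative property is immediate, and the distributional identity with the odd construction for $i\neq 0$ noted in the text follows because both reduce to the same function of an i.i.d. family of standard excursions.
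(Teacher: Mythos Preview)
Your proposal is correct and follows essentially the same approach as the paper: the paper simply states that the proposition ``has the same proof as Proposition~\ref{prop: sym renewal odd}'', and your write-up is precisely a more detailed rendition of that inductive argument, with the added (but appropriate) remark that the $1/2$-shift in the even centring still places symmetric regenerations at whole excursion boundaries.
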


\section{Moment estimates for the symmetric process}
\label{sec: moment estimates}
To apply the necessary renewal theory arguments, we need to obtain moment estimates for the $S_i$. We first recall a Markovian representation of the $T_i$ used in \cite{BB17} to prove that they have finite first and second moments, and we refer the reader there for proofs of any unproven claims in this section.

The regeneration times $T_i$ can be viewed as the return times of a Markov chain. Specifically, consider the process
	\begin{equation*}
	M_n=\max_{1\leq i\leq n}w(i)-n
	\end{equation*}
	on $\mathbf{N}$. This is a positive recurrent Markov process with stationary distribution
	\begin{equation*}
	\mu_j=\frac{1}{\prod_{k=1}^\infty (1-q^k)}\frac{q^j}{\prod _{k=1}^j(1-q^k)}.
	\end{equation*}
	The Markov process can be described in terms of the geometric random variables defining the Mallows process. Specifically, the walk can be described as moving from $M_n$ to $M_{n+1}=\max(M_n,Z_{n})-1$ where the $Z_n$ are independent geometric random variables. Let $R_i$ denote the hitting time of $i$ and let $R_i^+$ denote the return time at $i$. Then if the chain is started from $0$, $R_0^+$ is distributed as the size of an excursion in the Mallows process. This Markov chain was used in \cite{BB17} to show that the $T_i$ have finite first and second moments, and will also be essential to show that the $S_i$ have finite moments. A related Markov chain was considered in \cite{GP18}.
	
We now consider the regeneration times $S_i$. The regeneration times $S_i$ ask when two independent copies of the Markov chain starting from $(0,0)$ return to $(0,0)$ simultaneously, since both sides of the stationary Mallows process behave like the one-sided Mallows process. When $n$ is odd, for $S_0$, we can write it as $T_0+S_0'$, where $T_0$ is the length of the excursion containing $0$ and $S_0'$ asks when two independent copies of the Markov chain, started from a state depending on when the excursion $w_0$ starts and ends, hits $(0,0)$, and similarly for $n$ even except we consider the excursion containing $1/2$ (meaning it either contains both $0$ and $1$, or is defined to be empty if this isn't possible).

More formally, consider the Markov chain $(M_n,M_n')$ consisting of two independent copies of the above chain. We let $R_{(i,j)}$ denote the hitting time of $(i,j)$ and $R_{(i,j)}^+$ denote the return time at $(i,j)$. The next proposition shows that the $S_i$ are essentially given by these return times.

\begin{proposition}
\label{prop: S_i=R}
For $i>0$, $S_i$ is equal to $2R_{(0,0)}^+$ in distribution, and $S_0$ is equal in distribution to $2R_{(0,0)}$ where the Markov chain is started at some random location.
\end{proposition}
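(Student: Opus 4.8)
The plan is to realize the two sides of the stationary Mallows process as two independent copies of the Markov chain $M_n$, and then to recognize the regeneration times $S_i$ as (twice) the simultaneous return or hitting times of the product chain $(M_n,M_n')$. Recall that, started from $0$, the chain $M_n$ returns to $0$ exactly at the cumulative excursion lengths $T_1, T_1+T_2,\dots$ of the one-sided Mallows process, since $M_n=0$ precisely when $\widehat{w}(1),\dots,\widehat{w}(n)$ is a permutation of $\{1,\dots,n\}$. In the two-sided stationary process, the excursions $w_1,w_2,\dots$ to the right of the center and $w_{-1},w_{-2},\dots$ to its left are, by the regenerative structure together with the time-reversal invariance $-\widehat{w}(-i)\stackrel{d}{=}\widehat{w}(i)$, two independent i.i.d.\ families distributed as in the one-sided process; hence the right and left cumulative excursion lengths are governed by two independent copies $M_n$ and $M_n'$ of the chain, each hitting $0$ exactly at its own cumulative excursion times.

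For $i>0$ the identification is then immediate from the strong Markov property. The defining relation for $S_{i+1}$, namely $\sum_{j=k_i^-+1}^{k_{i+1}^-}T_{-j}=\sum_{j=k_i^++1}^{k_{i+1}^+}T_j$, says exactly that the outward partial sums of excursion lengths, measured from the previous regeneration, first agree; in terms of the chains this is the first position after the previous simultaneous visit at which both $M_n$ and $M_n'$ sit at $0$, and this common one-sided position equals $S_{i+1}/2$. Since at time $S_i$ both chains are at $0$, the strong Markov property shows that $S_{i+1}/2$ is distributed as the return time $R_{(0,0)}^+$ of the product chain started at $(0,0)$ and is independent of the past, giving $S_i$ equal in distribution to $2R_{(0,0)}^+$, consistent with Proposition~\ref{prop: sym renewal odd}.

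For $S_0$ I would treat the excursion $w_0$ straddling the center separately. In the stationary process the center lies inside $w_0$, and by stationarity the pair $(T_0^+,T_0^-)$ recording how $w_0$ is split by the center has an explicit size-biased law. After $w_0$ ends, fresh excursions resume on each side, so the subsequent evolution is again governed by the product chain, but now launched from an initial state determined by $(T_0^+,T_0^-)$ --- this is the ``random location'' of the statement --- and the first simultaneous visit to $0$ is the hitting time $R_{(0,0)}$ from that state. Tracking the total number of points in $v_0$ on both sides of the center then yields $S_0$ equal in distribution to $2R_{(0,0)}$. The even case is handled identically after recentering the reversal at $1/2$ and allowing $w_0$ to be empty, as in the passage preceding Proposition~\ref{prop: sym renewal even}.

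The main obstacle I expect is the bookkeeping in this last step: correctly encoding the position of the center within $w_0$ as an initial state for the product chain, reconciling the decomposition $S_0=T_0+S_0'$ with the factor of $2$, and keeping track of the center point itself together with the parity conventions (the fact that $S_0$ is always odd in the odd case, and the shift to $1/2$ in the even case). By contrast the $i>0$ case is essentially forced once the chain correspondence is in place, so all of the genuine work lies in translating the center excursion into the correct random initial condition.
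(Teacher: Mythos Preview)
Your proposal is correct and follows essentially the same approach as the paper: identify the two sides of the stationary process (via time-reversal invariance and the regenerative structure) with two independent copies of the chain $M_n$, so that for $i>0$ the condition defining $S_i$ is exactly the first simultaneous return of the product chain to $(0,0)$, while for $i=0$ the straddling excursion $w_0$ forces a random initial state. The paper makes your ``random location'' concrete exactly in the way you anticipate as the main obstacle: it writes $S_0=T_0+S_0'$ and obtains the initial state of the product chain by running the component on the \emph{shorter} side of $w_0$ forward until its position matches the longer side, so that one coordinate starts at $0$ and the other at the state reached after $|T_0^+-T_0^-|$ steps.
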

\begin{proof}
First, suppose that $i>0$. Since $-\widehat{w}(-i)$ is equal in distribution to $\widehat{w}(i)$, the stationary Mallows process to the left of a renewal is equal in distribution to the usual one-sided Mallows process after a shift and reflections across the $x$ and $y$ axes. Moreover, it's independent of the process to the right of a renewal that occurs after the first one. Thus, $S_i$ simply asks for the first time that $\sum T_i=\sum T_i'$, where $T_i$ and $T_i'$ are the regeneration times in two independent copies of the one-sided Mallows process. This is exactly $R_{(0,0)}$, except we multiply by a factor of $2$ due to the definition of $S_i$.

Now take $i=0$, and first, suppose $n$ is odd. When $i=0$, we can write $S_0=T_0+S_0'$, where $T_0$ is the length of the excursion $w_0$ containing $0$. But then $S_0'$ is simply $2R_{(0,0)}$ for the Markov chain started at a random point given by taking the shorter side of the excursion $w_0$, and running the component corresponding to that side until it reaches the same length. 

When $n$ is even, the same proof applies, except that we consider $w_0$ the excursion that contains both $0$ and $1$, or is empty if $0$ and $1$ are contained in distinct excursions.
\end{proof}

We also define $B_i$ to be the set of points $(j,k)$ where $\max(j,k)=i$, and we let $R_{B_i}$ denote the hitting time of $B_i$. The proof that these times have finite moments then proceeds similarly to the analogous proof for the $T_i$, given in \cite{BB17, H21}.

We will let $\E_x$, $\P_x$, and $\E_{\mu\times \mu}$, $\P_{\mu\times\mu}$ denote the expectation and probability measure for the Markov chain started from $x\in\mathbf{N}^2$ and from stationarity respectively.

\begin{lemma}
\label{lem: moment ineq}
For any point $x\in B_{i+1}$ and any point $y\in B_i$, we have $\E_x(R_{B_i}^k)\leq \E_y(R_{B_{i-1}}^k)$.
\end{lemma}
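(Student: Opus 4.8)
The plan is to exploit the description of $R_{B_i}$ as a simultaneous threshold-crossing time for two independent copies of the single-coordinate chain, and to prove the inequality by a synchronous coupling of these chains. The first observation is that, since each coordinate evolves by $M\mapsto\max(M,Z)-1$, the maximum coordinate $\max(M_n,M_n')$ can decrease by at most $1$ at each step. Hence, starting from $x\in B_{i+1}$, the level cannot skip over $i$, and
\begin{equation*}
R_{B_i}=\inf\{n\ge 0:M_n\le i\text{ and }M_n'\le i\},
\end{equation*}
the first time both coordinates lie weakly below $i$; likewise $R_{B_{i-1}}$ from $y\in B_i$ is the first simultaneous time both coordinates lie weakly below $i-1$. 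Since all the dependence is through these two thresholds, it is natural to compare the two quantities by coupling the underlying increments $Z_n$.

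The main tool is the \emph{synchronous (monotone) coupling}, in which two copies of the single-coordinate chain are driven by the same geometric variables. This coupling preserves the pointwise order of the two copies, and if they start at distance at most $1$ it keeps them at distance at most $1$ until they coalesce, after which they agree forever. Coupling the two $2$-dimensional chains coordinatewise yields two consequences. First, within a fixed level the descent time to the next level down is monotone in the starting point: if $x\le x'$ coordinatewise then $R_{B_i}(x)\le R_{B_i}(x')$ almost surely, whence $\E_x(R_{B_i}^k)\le\E_{x'}(R_{B_i}^k)$. Consequently $\E_x(R_{B_i}^k)$ is maximized over $B_{i+1}$ at the corner $(i+1,i+1)$, while $\E_y(R_{B_{i-1}}^k)$ is minimized over $B_i$ at a tip $(i,0)$ (the two tips being equal by symmetry), so it suffices to establish $\E_{(i+1,i+1)}(R_{B_i}^k)\le\E_{(i,0)}(R_{B_{i-1}}^k)$. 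Second, for a single coordinate the same coupling shows that descending one level starting from $i+1$ is pathwise no slower than descending one level starting from $i$: maintaining the relation $A\le B+1$, whenever $B\le i-1$ we have $A\le i$. This is the direct analogue of the one-dimensional argument used for the $T_i$ in \cite{BB17,H21}.

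I expect the reduced comparison $\E_{(i+1,i+1)}(R_{B_i}^k)\le\E_{(i,0)}(R_{B_{i-1}}^k)$ to be the main obstacle. Both coordinates of the corner configuration start one above the threshold $i$ and so descend reliably, failing to drop only with probability $q^{i+1}$; for the tip, the first coordinate starts one above its threshold $i-1$, while the second starts at $0$ and is already below it, the catch being that this low coordinate can jump back above $i-1$ with the larger probability $q^{i}$. Thus the inequality holds precisely because the per-step probability of lying below the threshold is monotone in the level, and a purely pathwise coordinatewise domination \textbf{fails}: on the second coordinate the corner chain is pointwise \emph{larger} than the tip chain, so it is the combined two-coordinate failure probability (of order $2q^{i+1}$ against $2q^{i}$), rather than any almost-sure ordering, that makes the corner faster. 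I would therefore prove the reduced inequality by a one-step analysis peeling off a single level, coupling the increments so as to compare these per-step success probabilities directly, and inducting on $i$; the recursive shape of the statement is exactly suited to such an induction. The delicate point to control rigorously is the simultaneity requirement together with the rare upward excursions of the coordinate that starts below threshold, which is what prevents a purely pathwise argument and forces a quantitative comparison of the two failure probabilities.
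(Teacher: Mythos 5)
Your reduction via the synchronous coupling is sound as far as it goes: order preservation does show that $\E_x(R_{B_i}^k)$ is maximized over $B_{i+1}$ at the corner $(i+1,i+1)$ and that $\E_y(R_{B_{i-1}}^k)$ is minimized over $B_i$ at a tip, so the lemma reduces to $\E_{(i+1,i+1)}(R_{B_i}^k)\leq \E_{(i,0)}(R_{B_{i-1}}^k)$. The problem is that you stop there: the reduced inequality, which you yourself flag as the main obstacle, is only sketched (a one-step peeling, a comparison of per-step failure probabilities of order $q^{i+1}$ versus $q^i$, and an induction on $i$), and no argument is given for why such an induction would close, nor how a one-step probability comparison would control all moments of a hitting time. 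As written, the proposal does not prove the lemma.

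Moreover, the reason you give for abandoning a pathwise argument is mistaken. It is true that under the synchronous coupling the corner's second coordinate dominates the tip's second coordinate pointwise, but this does not preclude an almost-sure ordering of the hitting times, and in fact $R_{B_i}\leq R_{B_{i-1}}$ does hold pathwise under this coupling; that is exactly the paper's proof, stated for arbitrary $x\in B_{i+1}$ and $y\in B_i$ with no need for the reduction to extreme points. The ingredient you are missing is that the recursion $M_{n+1}=\max(M_n,Z_n)-1$ gives
\begin{equation*}
x_2(n)\;\leq\;\max\Bigl(x_2(0)-n,\ \max_{1\leq j\leq n}\bigl(Z'_j-1-(n-j)\bigr)\Bigr)\;\leq\;\max\bigl(i+1-n,\ y_2(n)\bigr),
\end{equation*}
since the inner maximum is a lower bound for $y_2(n)$ when both coordinates are driven by the same $Z'_j$. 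Hence at any time $n\geq 1$ with $y_2(n)\leq i-1$ one has $x_2(n)\leq i$; combined with $x_1(n)\leq y_1(n)+1$ (the first coordinates start at distance one and stay within distance one until they coalesce), the corner chain lies in $B_{\leq i}$ at the first time the tip chain enters $B_{\leq i-1}$, and since levels decrease by at most one per step the corner chain has already hit $B_i$ by then. The paper packages this as a short first-step case analysis on whether $Z\geq i+1$ and/or $Z'\geq i+1$: a large increment coalesces the two chains in that coordinate, while a small increment pushes the relevant coordinate down to the target level. So the correct completion of your argument is precisely the pathwise coupling you dismissed, not a quantitative comparison of failure probabilities.
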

\begin{proof}
We couple two copies of the chain, started from $x$ and $y$, using the same underlying geometric random variables. Recall that we sample a step of the Markov chain by sampling geometric random variables $Z$ and $Z'$, and setting $(M_{n+1},M_{n+1}')=(\max(M_n,Z)-1,\max(M_n',Z')-1)$. Thus, each component can decrease by at most $1$ in a single step. The sets $B_i$ are constructed such that the Markov chain, if it starts on $B_{k}$ for any $k>i$, must hit $B_i$ before $B_{i-1}$. Thus, if the two copies ever collide before hitting $B_{i-1}$, they must both hit $B_i$ before $B_{i-1}$.

Consider the next step, generated by geometric random variables $Z$ and $Z'$. First, note that if both $Z\geq i+1$ and $Z'\geq i+1$, then both chains will couple, and so $B_i$ must be hit by both chains before $B_{i-1}$ can be hit.

If $Z\geq i+1$ but $Z'\leq i$, then the two copies of the chain couple in the first coordinate, and the second coordinates are both at most $i$. Then either in the subsequent steps, the $Z'$ are all at most $i$, in which case the two chains will eventually hit $B_i$ before $B_{i-1}$, or the $Z'$ will eventually be at least $i+1$, in which case both chains will couple. The case when $Z\leq i$ and $Z'\geq i+1$ is similar.

On the other hand, if both $Z\leq i$ and $Z'\leq i$, then the two chains will hit $B_i$ and $B_{i-1}$ respectively in the first step, and so the hitting times are equal. But then we've shown that $R_{B_i}\leq R_{B_{i-1}}$ under this coupling, and the result follows.
\end{proof}

\begin{lemma}
\label{lem: finite return time}
For all $k$, $\E_{(0,0)}((R_{(0,0)}^+)^k)<\infty$.
\end{lemma}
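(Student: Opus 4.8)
The plan is to reduce the two-dimensional return time to a one-dimensional return time driven by a slightly heavier (but still light-tailed) innovation, so that the single-chain estimate behind Proposition \ref{prop: T_i estimates} applies essentially verbatim. The starting point is to unroll the max-recursion. Writing $M_{n+1}=\max(M_n,Z_n)-1$ and setting $Y_n=M_n+n$ gives $Y_n=\max(Y_{n-1},Z_{n-1}+(n-1))$, so that starting from $M_0=0$,
\[
M_n=\max_{0\le j\le n-1}(Z_j+j)-n.
\]
Hence $M_n=0$ precisely when $Z_j+j\le n$ for all $j\le n-1$ (the reverse inequality at $j=n-1$ is automatic since $Z_{n-1}\ge 1$). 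Running the two coordinates with independent innovations $Z_j,Z_j'$, the event $(M_n,M_n')=(0,0)$ is then exactly $\{W_j+j\le n\text{ for all }0\le j\le n-1\}$, where $W_j=\max(Z_j,Z_j')$ are i.i.d. But this is literally the return-to-$0$ condition for a single copy of the chain run with innovations $W_j$ in place of $Z_j$.

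Consequently $R_{(0,0)}^+$ equals, in distribution, the excursion length $R_0^+$ of a one-dimensional Mallows-type chain whose innovation is $W$. Since $W=\max(Z,Z')\le Z+Z'$, we have $\E(\lambda^W)\le \E(\lambda^Z)^2<\infty$ for $\lambda<q^{-1}$, so $W$ has exactly the same exponential tail decay as $Z$. The only property of the innovation distribution used in the proof of Proposition \ref{prop: T_i estimates} is this exponential integrability, so that argument carries over word for word to the chain driven by $W$ and yields $\E_{(0,0)}((R_{(0,0)}^+)^k)<\infty$ for all $k$ (indeed finite exponential moments). This is the route I would actually take, as it avoids any self-referential bookkeeping.

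The main obstacle only appears if one instead follows the level-set route that Lemma \ref{lem: moment ineq} is designed for, and it is worth identifying. That lemma reduces the $k$th moment of every one-level descent to that of the descent from $B_1$ down to $B_0=\{(0,0)\}$: decomposing $R_{(0,0)}^+=1+\sum_{i=1}^{L}D_i$ after a first step to a random level $L$, and noting $\E(L^k)<\infty$ because the jumps are geometric, a conditional power-mean bound $\big(\sum_i D_i\big)^k\le L^{k-1}\sum_i D_i^k$ together with $\E(D_i^k\mid\cdot)\le \sup_{x\in B_1}\E_x(R_{(0,0)}^k)$ controls the whole return time except for that one supremum. Bounding the level-$1$ descent directly is, however, circular: from level $1$ the chain can make an arbitrarily high upward excursion before returning, and the resulting self-referential inequality has a coefficient $\ge 1$ and does not close. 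This is precisely the place where the light-tail structure of the innovations is indispensable, and the distributional identity $R_{(0,0)}^+\stackrel{(d)}{=}R_0^+(W)$ above is exactly what sidesteps the circularity and makes the moment estimate go through cleanly.
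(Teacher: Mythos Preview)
Your argument is correct and genuinely different from the paper's. The key observation---that the unrolled representation $M_n=\max_{0\le j\le n-1}(Z_j+j)-n$ turns the joint event $\{(M_n,M_n')=(0,0)\}$ into the single-chain event $\{\tilde M_n=0\}$ for $\tilde M$ driven by $W_j=\max(Z_j,Z_j')$---gives a pathwise identity $R_{(0,0)}^+=\tilde R_0^+$, collapsing the two-dimensional problem to a one-dimensional one with a slightly heavier (still exponentially tailed) innovation. The paper instead argues directly in two dimensions: it inducts on $k$, uses Lemma~\ref{lem: moment ineq} to bound $\E_{\mu\times\mu}(R_{(0,0)}^k)$ by a constant times $\max_{x\in B_1}\E_x(R_{(0,0)}^k)$ (finite by the inductive hypothesis), and then invokes the identity $\P_{\mu\times\mu}(R_{(0,0)}=t-1)=\mu_0^2\,\P_{(0,0)}(R_{(0,0)}^+\ge t)$ from \cite{AF} to push the moment up by one. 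Your route is shorter and sidesteps Lemma~\ref{lem: moment ineq} entirely, at the cost of relying on the assertion that the proof behind Proposition~\ref{prop: T_i estimates} uses only exponential integrability of the innovation; that claim is correct, but since the paper only cites \cite{H21} for it, you should say explicitly which properties of the $W$-driven chain you need (positive recurrence and exponential tail of its stationary law) and note they follow from the tail of $W$. One small correction to your commentary: the level-set approach you describe as ``circular'' is exactly what the paper makes work---the circularity is broken not by a contraction inequality but by the \cite{AF} identity, which trades the $(k{+}1)$st moment of $R_{(0,0)}^+$ for the $k$th moment of the hitting time from stationarity.
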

\begin{proof}
We proceed by induction on $k$. The case $k=1$ follows from the existence of the stationary distribution. We now assume that $\E_{(0,0)}((R_{(0,0)}^+)^k)<\infty$. We first show that $\E_{\mu\times \mu}(R_{(0,0)}^k)<\infty$.

To see this, note that
\begin{equation*}
    \E_{\mu\times\mu}(R_{(0,0)}^{k})=\sum_{i,j}\mu_i\mu_j\left(\sum _{l=1}^{\max(i,j)} T_{l\to l-1}\right)^k\leq \sum_{i,j}\mu_i\mu_j \max(i,j)^k \max_{x\in B_1}\E_{x}(R_{(0,0)}^k)^k,
\end{equation*}
where $T_{l\to l-1}$ denotes the time it takes to reach $B_{l-1}$ starting from the first time we hit $B_{l}$, and we expand into $\max(i,j)^k$ terms and use Lemma \ref{lem: moment ineq} to replace $\E(T_{l\to l-1}^l)$ with $\max_{x\in B_1}\E_{x}(R_{(0,0)}^k)$, which will dominate $T_{1\to 0}$ no matter the distribution of the initial state. But since $\mu_i\mu_j\leq Aq^{i+j}$ for some $A>0$, and since $\E_{x}(R_{(0,0)}^k)<\infty$ by the inductive hypothesis (since we can always move from $(0,0)$ to $x$ in the first step), we've shown $\E_{\mu\times\mu}(R_{(0,0)}^{k})$ is finite.

Then Lemma 2.23 of \cite{AF} states
		\begin{equation*}
		\P_{\mu\times \mu}(R_{(0,0)}=t-1)=\mu_0 ^2\P_{(0,0)}(R_{(0,0)}^+\geq t)
		\end{equation*}
		which gives
		\begin{equation*}
		\E_{(0,0)}(P_{k+1}(R_{(0,0)}^+))=\frac{\E_{\mu\times \mu}((R_{(0,0)}+1)^{k})}{\mu_0^2}
		\end{equation*}
		after multiplying by $t^{k}$ and summing over $t$, where
		\begin{equation*}
		P_{k+1}(m)=\sum _{i=1}^m i^{k}
		\end{equation*}
		are the Faulhaber polynomials of degree $k+1$. This implies that $\E_{(0,0)}((R_{(0,0)}^+)^{k+1})<\infty$ since all other expressions in the equation are finite by the inductive hypothesis.
\end{proof}

\begin{proposition}
\label{prop: S_i estimates}
For both $n$ even and $n$ odd, we have $\E(S_i^k)<\infty$ for all $i$ and $k$.
\end{proposition}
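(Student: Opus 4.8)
The plan is to treat the cases $i>0$ and $i=0$ separately, since only the latter involves the size-biased central excursion. For $i>0$ the estimate is immediate: Proposition \ref{prop: S_i=R} gives that $S_i$ equals $2R_{(0,0)}^+$ in distribution, so $\E(S_i^k)=2^k\,\E_{(0,0)}((R_{(0,0)}^+)^k)$, which is finite for every $k$ by Lemma \ref{lem: finite return time}. The same identity handles both parities, since for $i>0$ the $S_i$ have identical distributions in the even and odd constructions. All the work is therefore in bounding the moments of $S_0$.

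For $i=0$ (I describe $n$ odd; the even case is identical, reading $w_0$ as the excursion straddling $1/2$), Proposition \ref{prop: S_i=R} writes $S_0=T_0+S_0'$, where $T_0$ is the size of the central excursion $w_0$ and $S_0'=2R_{(0,0)}$ is twice the hitting time of $(0,0)$ for the two-dimensional chain started from a random state $X$ determined by $w_0$. By Minkowski's inequality it suffices to bound $\E(T_0^k)$ and $\E((S_0')^k)$ separately. The first is finite for every $k$: $T_0$ has the size-biased distribution of the excursion sizes $T_i$, which have finite moments of all orders by Proposition \ref{prop: T_i estimates}, and size-biasing preserves this. For the second I would condition on $X$ and write $\E((S_0')^k)=2^k\,\E\big(\E_X(R_{(0,0)}^k)\big)$.

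The key deterministic input is the polynomial moment bound $\E_x(R_{(0,0)}^k)\le C_k\,\max(x)^k$, valid for every $x\in\mathbf{N}^2$ with a constant $C_k<\infty$. This is exactly the estimate already carried out inside the proof of Lemma \ref{lem: finite return time}: starting from $x$ with $\max(x)=m$, one decomposes $R_{(0,0)}=\sum_{l=1}^{m}T_{l\to l-1}$ into the successive passage times from $B_l$ to $B_{l-1}$, applies Minkowski, and uses Lemma \ref{lem: moment ineq} to dominate each $\|T_{l\to l-1}\|_k$ by $\max_{y\in B_1}\E_y(R_{(0,0)}^k)^{1/k}=:C_k^{1/k}$, which is finite by Lemma \ref{lem: finite return time}. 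Combining these, $\E((S_0')^k)\le 2^kC_k\,\E(\max(X)^k)$, so everything reduces to showing $\E(\max(X)^k)<\infty$ for all $k$.

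The main obstacle is precisely this last moment bound on the random starting state $X$. The coordinate of $X$ coming from the longer side of $w_0$ is $0$ (that side ends at a regeneration), while the other coordinate is the chain value $M$ of the shorter side run until it reaches the length of the longer side, i.e. an $M$-value sampled at a random time correlated with $T_0$. To control it I would use the pointwise bound $M_n\le T'_n$, where $T'_n$ is the size of the excursion containing position $n$; this holds because $\max_{1\le i\le n}\widehat w(i)-n$ cannot exceed the size of the excursion currently being traversed. Thus $\max(X)$ is dominated by the size of a single excursion of a one-sided Mallows process at a random position which is at most $T_0$. Since excursion sizes have finite moments of all orders by Proposition \ref{prop: T_i estimates}, and the size-bias induced by landing at a random interior position only weights by an excursion length (again preserving finiteness of all moments, uniformly in the position by the renewal structure, cf. Proposition \ref{prop: size-bias}), we obtain $\E(\max(X)^k)<\infty$ for every $k$. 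This yields $\E((S_0')^k)<\infty$, and hence $\E(S_0^k)<\infty$, completing both the even and odd cases.
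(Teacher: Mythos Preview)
Your argument is correct and follows essentially the same route as the paper. For $i>0$ the two proofs are identical. For $i=0$ both decompose $S_0=T_0+S_0'$ and reduce to bounding the hitting time of $(0,0)$ from a random start $X=(M_T,0)$ with $T\le T_0$; the paper dominates this hitting time directly by $R_1+\cdots+R_{T+1}$ (return times of the two-dimensional chain) and then uses $\E((T+1)^k)<\infty$, whereas you first extract the polynomial bound $\E_x(R_{(0,0)}^k)\le C_k\max(x)^k$ from the proof of Lemma~\ref{lem: finite return time} and then bound $\E(\max(X)^k)$ via $M_T\le T'_T$.

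One small point: your appeal to Proposition~\ref{prop: size-bias} to control $\E((T'_T)^k)$ is not quite the right reference, since that result is an asymptotic convergence statement, not a uniform moment bound. What you actually need (and what makes your argument go through) is the elementary estimate $\E((T'_m)^k)\le m\,\E(T_1^k)$, obtained by noting that the excursion containing position $m$ is one of the first $m$ iid excursions; integrating over the independent $T\le T_0$ then gives $\E((T'_T)^k)\le \E(T_0)\,\E(T_1^k)<\infty$. With this correction, your argument is complete and equivalent in strength to the paper's.
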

\begin{proof}
When $i>0$, this is immediate by Lemma \ref{lem: finite return time} since $S_i$ is equal in distribution to $2R_{(0,0)}^+$ by Proposition \ref{prop: S_i=R}. 

For $i=0$, we note that $S_0$ is equal in distribution to $2R_{(0,0)}^+$, but started from a random state given by running one component of the Markov chain for a random time $T$ and conditioning on the second component being $0$. We can bound the return time to $0$ starting from the random state given by running the chain until time $T$ by $R_1+\dotsc+R_{T+1}$, where $R_i$ is $i$th return time to $0$. The $k$th moment of this is bounded by $\E((T+1)^k)\E(R_1^k)^k$ which is finite since $\E(R_1^k)\leq \E((R_{(0,0)}^+)^k)$ and $T$ is bounded by the length of the excursion containing $0$, which has the size-bias distribution of the excursion lengths and so has finite moments. Finally, since the two components are independent, we can condition on the second component being $0$, which has positive probability.
\end{proof}

\section{Proofs of main theorems}
\label{sec: main}

\subsection{Proofs of Theorems \ref{thm: q<1} and \ref{thm: q>1}}
With the regenerative structure given by the Mallows process and the moment estimates given by Proposition \ref{prop: T_i estimates}, Theorem \ref{thm: q<1} follows immediately from Theorems \ref{thm: regen CLT} and \ref{thm: moment conv}.

\begin{proof}[Proof of Theorem \ref{thm: q<1}]
We note that $C_i(w)$, the number of cycles, is additive in the sense that $C_i(w)=C_i(w_1)+C_i(w_2)$ if $w=w_1w_2$ and $w_1,w_2$ send disjoint intervals to themselves. Thus, if we identify the random Mallows permutation $w\in S_n$ with the relative order of $\widehat{w}(i)$ for $i\leq n$, we obtain
\begin{equation*}
    C_i(w)=\sum_{j=1}^{N(n)}C_i(w_j)+C_i(w'),
\end{equation*}
where $w_j$ are the excursions in the Mallows process $\widehat{w}$ that end before time $n$, and $w'$ is the relative order of $\widehat{w}(j)$ for $j$ after the end of the last excursion.

Now we note that $\frac{C_i(w')}{\sqrt{n}}\to 0$ almost surely, as $C_i(w')\leq T^{(n)}\xrightarrow{(d)} T^*$ by Proposition \ref{prop: size-bias}, where $T^{(n)}$ is the size of the excursion containing $n$ and $T^*$ has the size-bias distribution with respect to the $T_i$. Since $C_i(w_j)\leq T_j$, all moments exist for the $C_i(w_j)$ by Proposition \ref{prop: T_i estimates}. Thus, Theorem \ref{thm: regen CLT} gives the joint convergence in distribution for any finite collection of the $C_i$, and thus also the convergence in distribution for the entire vector.

The mean and covariance asymptotics follow from Theorem \ref{thm: moment conv}. We note that $\beta_{ii}>0$ for all $i$, because we can condition on $T_1=k$ for some large $k$ (which occurs with positive probability), and $C_i$ will not be constant on the irreducible permutations in $S_k$. Finally, the statement about $C(w)$ follows from the exact same argument, applied to $C$ rather than $C_i$.
\end{proof}

The proof of Theorem \ref{thm: q>1} is similar to that of Theorem \ref{thm: q<1}, except that we work with the symmetric process rather than the Mallows process.

\begin{proof}[Proof of Theorem \ref{thm: q>1}]
We note that $C_{i}(w)$, is anti-additive in the sense that $C_i(w)=C_i(w_1)+C_i(w_2)$ if $w=w_1w_2$ and for some $k$, $w_1$ sends $[k+1,n-k]$ to itself while $w_2$ sends $[1,k]$ to $[n-k+1,n]$ and vice versa. 

We first consider the case when $n$ is odd. If we identify the random Mallows permutation $w\in S_n$ with the relative order of $\widehat{w}^{rev}(k)$ for $k\in [-(n-1)/2,(n-1)/2]$, we obtain
\begin{equation*}
    C_{2i}(w)=\sum_{j=0}^{N(n)}C_{2i}(v_j)+C_{2i}(v'),
\end{equation*}
where $N(n)$ is the largest $t$ for which $\sum_{j\leq t}S_j\leq n$, $v_j$ are the excursions in the symmetric process that end before time $n$, and $v'$ is the relative order of $\widehat{w}^{rev}(k)$ for $|k|>\sum_{j=0}^{N(n)}S_j/2$. When $n$ is even, we do essentially the same thing, except we will work on the interval $[-n/2+1,n/2]$ instead.

But now, we note that $\frac{C_{2i}(v')}{\sqrt{n}}\to 0$ almost surely as $C_{2i}(v')\leq S^{(n)}\xrightarrow{(d)} S^*$ by Proposition \ref{prop: size-bias} (where $S^{(n)}$ is the size of the $v_i$ containing $n/2$ or $(n-1)/2$, and $S^*$ has the size-bias distribution with respect to the $S_i$), and $C_{2i}(v_j)\leq S_j$ so all moments exist for the $C_{2i}(v_j)$ by Proposition \ref{prop: S_i estimates}, and so Theorem \ref{thm: regen CLT} gives the joint convergence in distribution for any finite collection of the $C_{2i}(w)$, and thus also the convergence in distribution for the entire vector.

The mean and covariance asymptotics follow from Theorem \ref{thm: moment conv}. Again, $\beta_{ii}>0$ because $C_{2i}$ will not be constant on the permutations in $S_{2k}$ sending $[1,k]$ to $[k+1,2k]$ and vice versa for large enough $k$. The statement on $C(w)$ follows from the exact same arguments applied to $C$ rather than $C_{2i}$.

Finally, since for all odd $i$, $C_i(v_j)=0$ for all $j>0$ and $C_i(v')=0$, we have $C_i(w)=C_i(v_0)$ as long as $v_0$ lies in the interval $[-(n-1)/2,(n-1)/2]$ (or $[-n/2+1,n/2]$ if $n$ is even). But the probability of this event goes to $1$ as $n\to\infty$, and so the $C_i(w)$ will converge almost surely to $C_i(v_0)$ for $i$ odd. Then $w^{odd}$ and $w^{even}$ can be taken to be $v_0$ under the odd and even constructions, which were used depending on whether $n$ was odd or even.
\end{proof}

\begin{remark}
\label{rmk: constants}
From the proof, it's clear that the constants $\alpha_i$, $\beta_{ij}$, and $\beta$, and similarly the $\alpha_i'$, $\beta_{ij}'$, and $\beta'$, are simply what is given by Theorem \ref{thm: moment conv} applied to either the Mallows process or the symmetric process. For convenience, we give the explicit expressions. Let $\mu=\E(T_1)$ and $\mu'=\E(S_1)$. Then we have
\begin{align*}
    \alpha_i&=\frac{1}{\mu}\E(C_i(w_1)),
    \\\beta_{ij}&=\frac{1}{\mu}\Cov\left(C_i(w_1)-\frac{\E(C_i(w_1))T_1}{\mu},C_j(w_1)-\frac{\E(C_j(w_1))T_1}{\mu}\right),
    \\\beta&=\frac{1}{\mu}\Var\left(C(w_1)-\frac{\E(C(w_1))T_1}{\mu}\right),
\end{align*}
and
\begin{align*}
        \alpha_i'&=\frac{1}{\mu'}\E(C_{2i}(v_1)),
        \\\beta_{ij}'&=\frac{1}{\mu'}\Cov\left(C_{2i}(v_1)-\frac{\E(C_{2i}(v_1))S_1}{\mu'},C_{2j}(v_1)-\frac{\E(C_{2j}(v_1))S_1}{\mu'}\right),
        \\\beta'&=\frac{1}{\mu'}\Var\left(C(v_1)-\frac{\E(C(v_1))S_1}{\mu'}\right).
\end{align*}
\end{remark}

\begin{remark}
\label{rmk: proof of ext}
Examining the proofs of Theorem \ref{thm: q<1}, we see that the only properties of the functions $C_i$ we use are that they are additive, in the sense that $C_i(w)=\sum C_i(w_j)+C_i(w')$, that we can control the moments of $C_i(w_j)$, and that the variances $\Var\left(C_i(w_1)-\frac{\E(C_i(w_1))T_1}{\mu}\right)$ are non-zero.

If $f$ is assumed to be additive, then the first property holds. If for $w\in S_n$, $|f(w)|\leq Cn^k$ for constants $C$ and $k$, then $|f(w_i)|\leq CT_i^k$ and since all moments exist for the $T_i$, the second property holds. To ensure that the variance is non-zero, it suffices to assume that there exists some $k$ such that conditional on $T_1=k$, $f(w_1)$ is non-constant, and this is what we mean be a non-trivial function. Essentially, there must be some $k$ for which $f$ is not constant on the irreducible permutations in $S_k$. With these assumptions, an analogue of Theorem \ref{thm: q<1} then holds with the exact same proof.

A similar explanation explains why anti-additivity and the bound $|f(w)|\leq Cn^k$ for $w\in S_n$ along with the assumption that there is some $k$ for which $f(v_1)$ conditional on $S_1=k$ is non-constant is enough to show an analogue of Theorem \ref{thm: q>1}.
\end{remark}

\subsection{Aymptotics for fixed points}
\label{sec: fixed points}
We can compute the asymptotics for the number of fixed points when $q<1$ by computing $\alpha_1$.

\begin{proposition}
\label{prop: a1}
Let $q<1$ and let $w\in S_n$ be Mallows distributed. We have
\begin{equation*}
    \alpha_1=\lim_{n\to\infty}n^{-1}\E(C_1(w))=\frac{1-q}{q}(q;q)_\infty\sum_{j=0}^\infty \frac{q^{(j+1)^2}}{(q;q)_j^2}.
\end{equation*}
\end{proposition}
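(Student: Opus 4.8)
The plan is to identify $\alpha_1$ with a stationary probability and then compute that probability through the Markovian description of Section \ref{sec: moment estimates}. By Theorem \ref{thm: moment conv} we have $\alpha_1=\E(C_1(w_1))/\mu$, which is exactly the long-run density of fixed points; since this Ces\`aro average converges, it equals $\lim_{n\to\infty}\P(\widehat{w}(n)=n)$ for the one-sided Mallows process. The first step is therefore to compute $\P(\widehat{w}(n)=n)$ explicitly. Recall that $\widehat{w}(n)$ is the $G_n$-th smallest available value, where $G_n$ is geometric with $\P(G_n=r)=q^{r-1}(1-q)$ and is independent of $\widehat{w}(1),\dotsc,\widehat{w}(n-1)$. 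The number of available values strictly below $n$ at step $n$ equals $M_{n-1}=\max_{i\leq n-1}\widehat{w}(i)-(n-1)$, so the rank of $n$ among the available values is $M_{n-1}+1$ whenever $n$ is still available. Hence $\widehat{w}(n)=n$ occurs precisely when $n$ is available and $G_n=M_{n-1}+1$, giving the clean identity
\begin{equation*}
\P(\widehat{w}(n)=n\mid \widehat{w}(1),\dotsc,\widehat{w}(n-1))=(1-q)\,q^{M_{n-1}}\,\mathbf{1}[n\text{ available}].
\end{equation*}
Thus $\alpha_1=(1-q)\lim_{n}\E\big(q^{M_{n-1}}\mathbf{1}[n\text{ available}]\big)$, an expectation under the stationary law $\mu_j$ of the chain.

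The second, and central, step is to compute this joint stationary expectation. The statistic $M_{n-1}$ counts positions below $n$ whose values exceed $n-1$, while availability of $n$ is governed by the configuration of \emph{values} above $n$ (equivalently, by $\widehat{w}^{-1}(n)\geq n$, a statistic of the inverse Mallows process, which is again Mallows of the same parameter). The key mechanism I expect is that, in the stationary limit, these two ``overshoot'' configurations decouple: conditioning on a common crossing level $j$ (the number of below-positions sent above, which by bijectivity equals the number of above-values coming from below), the left/below and right/above data become independent q-geometric sequences, one from the forward chain and one from its transpose. Each side contributes a factor of the stationary form $q^{j}/(q;q)_j$, and the requirement that the two interleaving length-$j$ patterns be compatible with a fixed point at $n$ contributes a q-Gaussian matching weight of order $q^{j^2}$. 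Assembling these with the $q^{M}$ weighting and the normalization $(q;q)_\infty$ from $\mu$ collapses the double sum over configurations into a single sum over the crossing level $j$ with the square $(q;q)_j^{-2}$ in the denominator and total exponent $q^{j^2+2j}=q^{(j+1)^2}/q$.

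The final step is bookkeeping: collecting the prefactor $(1-q)$ from the geometric choice, the $(q;q)_\infty$ from the stationary normalization, and the $q^{-1}$ from shifting the summation index yields exactly
\begin{equation*}
\alpha_1=\frac{1-q}{q}(q;q)_\infty\sum_{j=0}^\infty\frac{q^{(j+1)^2}}{(q;q)_j^2},
\end{equation*}
and the Gaussian-type exponent $q^{(j+1)^2}$ forces rapid convergence, as asserted. As a sanity check, the $j=0$ term gives $(1-q)(q;q)_\infty\to 1$ as $q\to 0$, matching the fact that the Mallows permutation degenerates to the identity, and the conditional availability $\P(n\text{ available}\mid M_{n-1}=0)=1$ is transparent since after a renewal $n$ is the smallest available value. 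I expect the main obstacle to be the rigorous justification of the factorization in the second step: one must pin down the precise conditional law of the above-$n$ (availability) configuration given $M_{n-1}$, and show that it decouples into two independent q-geometric contributions with the stated matching factor. This is exactly the point where the two-chain structure already exploited for the symmetric process reappears, and where the exponent $q^{(j+1)^2}$ and the squared Pochhammer $(q;q)_j^2$ originate; everything else is the geometric-step identity above and a routine q-series rearrangement.
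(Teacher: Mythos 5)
Your opening reduction --- that $\alpha_1$ equals the limiting fixed-point probability $\lim_{n}\P(\widehat{w}(n)=n)$ for the stationary Mallows process --- is exactly the first (and only substantive) step of the paper's proof; the paper then simply quotes Theorem 5.1 of \cite{GO12} (the case $d=0$ of the displacement distribution) for the value of that probability. Where you diverge is in trying to compute that probability from scratch, and there are two problems. First, your conditional identity is wrong: if $n$ is still available, its rank among the available values is $D_{n-1}+1$, where $D_{n-1}=\#\{i\le n-1:\widehat{w}(i)\ge n\}$ is the number of positions whose values overshoot $n-1$, not $M_{n-1}+1$ with $M_{n-1}=\max_{i\le n-1}\widehat{w}(i)-(n-1)$ the overshoot of the maximum. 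One has $D_{n-1}\le M_{n-1}$, with equality only when the used values above $n-1$ fill the whole interval $[n,\max_{i\le n-1}\widehat{w}(i)]$. Concretely, if $\widehat{w}(1)=1$ and $\widehat{w}(2)=4$, then $M_2=2$ but the only available value below $3$ is $2$, so $\P(\widehat{w}(3)=3\mid\widehat{w}(1),\widehat{w}(2))=q(1-q)$, not $q^2(1-q)$. (In your second paragraph you describe $M_{n-1}$ as counting ``positions below $n$ whose values exceed $n-1$,'' which is $D_{n-1}$; the two statistics have been conflated.)

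Second, and more fundamentally, your ``central step'' is not a proof: you write that you \emph{expect} the below/above configurations to decouple given the crossing level $j$, producing the $q^{j^2}$ matching weight and the $(q;q)_j^{-2}$, and you acknowledge that justifying this factorization is the main obstacle. That factorization --- the joint stationary law of the crossing data, from which the fixed-point probability follows --- is precisely the content of Gnedin--Olshanski's Theorem 5.1, i.e.\ exactly the piece of the argument that the paper outsources to a citation. As written, your proposal replaces that citation with a plausibility sketch whose starting identity is incorrect, so it does not establish the formula. The route is viable if made self-contained, but you would need to (i) work with the crossing number $D$ rather than the Markov chain statistic $M$, and (ii) actually derive the stationary joint law of the crossing configuration (a Durfee-square-type computation), rather than asserting its shape.
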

\begin{proof}
Since $w\in S_n$ and $w^{1,n}$ agree on all but the last excursion, which has finite mean, we have that $\alpha_1$ can be computed as the probability that $\widehat{w}(i)=i$ in the stationary Mallows process. The formula then follows from a special case of Theorem 5.1 in \cite{GO12}, when in their notation, $d=0$.
\end{proof}
\begin{remark}
Proposition \ref{prop: a1} can also be proven directly from the finite model, without using the Mallows process. This involves computing $\P(w(i)=i)$ for all $i$, which has a messy but explicit formula, and then computing $n^{-1}\sum_i \P(w(i)=i)$, which actually simplifies the sums somewhat.

In fact, one can obtain formulas for all the $\alpha_i$, but the expressions quickly become much more complicated, and do not appear useful for further analysis. We thus do not attempt to write down formulas for the other $\alpha_i$.
\end{remark}

\section*{Acknowledgment}
The author wishes to thank Persi Diaconis for bringing this problem to his attention. 

\bibliography{bibliography}{}
\bibliographystyle{habbrv.bst}

\end{document}